\newtheorem{theorem}{Theorem} % meant for continuous numbers
\newtheorem{proposition}[theorem]{Proposition}%
\newtheorem{example}[theorem]{Example}%
\newtheorem{lemma}[theorem]{Lemma}%
\theoremstyle{thmstylethree}%
\newcommand{\trans}{^\top}
\newcommand{\ba}{\mathbf a}
\newcommand{\bb}{\mathbf b}
\newcommand{\be}{\mathbf e}
\newcommand{\bff}{\mathbf f}
\newcommand{\bw}{\mathbf w}
\newcommand{\bx}{\mathbf x}
\newcommand{\by}{\mathbf y}
\newcommand{\bz}{\mathbf z}
\newcommand{\bmu}{\boldsymbol{\mu}}
\newcommand{\bxi}{\boldsymbol{\xi}}
\newcommand{\zero}{\mathbf{0}}
\newcommand{\calf}{\mathcal F}
\newcommand{\caln}{\mathcal N}
\newcommand{\calo}{\mathcal O}
\newcommand{\calu}{\mathcal U}
\newcommand{\calv}{\mathcal V}
\newcommand{\calz}{\mathcal Z}
\newcommand{\R}{\mathbb R}
\newcommand{\C}{\mathbb C}
\newcommand{\wh}{\widehat}
\newcommand{\wt}{\widetilde}
\newcommand{\eps}{\varepsilon}
\newcommand{\ex}{\mathbb{E}} % or \mathbb{E} - expected value
\newcommand{\Afix}{A^{\#}}
\newcommand{\tr}{{\rm tr}}
\newcommand{\fixp}{\bx_{\infty}}
\newcommand{\mh}[1]{#1}
\begin{document}

\title
{On spectral properties and fast initial convergence \\ of the Kaczmarz method}

\author[1]{\fnm{Per Christian} \sur{Hansen}}\email{pcha@dtu.dk}

\author[2]{\fnm{Michiel E.} \sur{Hochstenbach}}\email{m.e.hochstenbach@tue.nl}
%\equalcont{These authors contributed equally to this work.}

\affil[1]{\orgdiv{Department of Applied Mathematics and Computer Science},
  \orgname{Technical University of Denmark}, \orgaddress{
  \city{Kgs.~Lyngby}, \postcode{DK-2800}, \country{Denmark}}}

\affil[2]{\orgdiv{Department of Mathematics and Computer Science},
  \orgname{TU Eindhoven}, \orgaddress{\street{PO Box 513}, \city{Eindhoven},
  \postcode{5600 MB}, \country{The Netherlands}}}

\abstract{The Kaczmarz method is successfully used for solving
discretizations of linear inverse problems, especially in computed tomography
where it is known as ART.
Practitioners often observe and appreciate its fast convergence in the first few iterations,
leading to the same favorable semi-convergence that we observe for
simultaneous iterative reconstruction methods.
While the latter methods have symmetric and positive definite iteration operators that facilitate their analysis,
the operator in Kaczmarz's method is nonsymmetric and it has been an open question so far to understand this fast initial convergence.
We perform a spectral analysis of Kaczmarz's method that gives new insight into its (often fast) initial behavior.
We also carry out a statistical analysis of how the data noise enters the iteration vectors,
which sheds new light on the semi-convergence.
Our results are illustrated with several numerical examples.}

\keywords{Kaczmarz, spectral properties, initial behavior, semi-convergence, asymptotic convergence, statistical analysis, symmetric Kaczmarz}

\pacs[MSC Classification]{65F10, 65F22, 65R30, 65R32, 65F50}

\maketitle
\begin{center}
{\em Dedicated with pleasure to our precious friend {\AA}ke Bj\"orck, \\ who is an inspiration and encouragement to us}
\end{center}

\section{Introduction}  % --------------------------------------------------------------
We consider a linear discrete ill-posed problem
\begin{equation} \label{Ax=b}
A\, \bx = \bb, \qquad A \in \R^{m \times n}
\end{equation}
obtained from discretization of an inverse problem, and
we may have any of the cases $m=n$, $m>n$, or $m<n$.
For such systems, the singular values of $A$ decay gradually to zero and the coefficients of $\bb$,
in the basis of the singular vectors, decay faster than the singular values.
We assume that \eqref{Ax=b} is consistent.
In addition, we suppose the very natural and standard condition that $A$ does not have zero rows (these can just be omitted from the system).
For details about discretizations of inverse problems see, e.g., \cite{HansenDIP}.
As we will also discuss in Section~\ref{sec:stage}, we are interested in the least-norm solution: we assume that $\bx$ is orthogonal to the nullspace of $A$, as it is not possible to reconstruct a component in the nullspace without additional information.

For large-scale problems, such as those that arise in X-ray computed tomography (CT) \cite{Beister}, we need to solve \eqref{Ax=b} by means of iterative methods and
the behavior of many of these methods, when applied to inverse problems, are well understood.
In this paper we consider a method that still needs scrutinization, namely,
the Kaczmarz method \cite{Kaczmarz} (see also \cite{GBH70, Tanabe, Cen81, EN08, Elfving18})
that repeatedly cycles through the rows of $A$.
Variants of Kaczmarz's method that access the rows in random order (see, e.g., \cite{FAM24} for a recent review) are not our focus, although we compare a deterministic and randomized Kaczmarz method in an experiment.

The Kaczmarz method may be seen as a fixed-point process,
and for a zero initial guess it converges to the unique least-norm solution; see Section~\ref{sec:stage} for more details.
Since we know that the method converges, the spectral radius of the corresponding iteration operator should be less than 1.
In fact, it may be very close to 1, resulting in a extremely slow asymptotic convergence behavior.
Yet, for many discrete ill-posed problems the Kaczmarz iterations make a very significant progress in the first few iterations, and the reason for this has been an open problem thus far.
The main goal of our paper is to study and explain this initial behavior.
We also discuss the so-called symmetric Kaczmarz method and its relations with the standard approach.

For the inverse problems that we consider there usually is noise in the right-hand side~$\bb$.
In this case we observe {\em semi-convergence}: the iterates initially get closer to the desired but unavailable solution of
the noise-free system, but then diverge because of the influence of the noise.
A second goal of our paper is to explain this behavior with statistical insight inspired by \cite{HansenLAA}.

In the Kaczmarz method and many other iterative methods that exhibit this semi-convergence,
the number of iterations $k$ may play the role of a regularization parameter, and we want to
stop the iterations right before the influence of noise starts to dominate.
Such stopping rules are not a topic of this paper;
they can be based on the same principles \cite{RR13} that underlie conventional
regularization methods such as Tikhonov regularization, e.g., by stopping when the
residual norm is of the same size of the norm of the errors in~$\bb$;
see \cite{HaJR21} for an overview of such stopping rules.

Our paper is organized as follows.
In Section~\ref{sec:stage}, we summarize the Kaczmarz method and introduce important notation.
We consider the convergence for the case of
noise-free data in Section~\ref{sec:eigen}; specifically, we study the eigenvalues of the iteration operator to explain the
convergence of Kaczmarz's method and obtain insight into the fast initial decay of the iteration error.
Section~\ref{sec:stat} then studies how the convergence is influenced by noise in the data;
this provides insight into the semi-convergence behavior.
Throughout the paper, we illustrate our theory and insight with numerical examples;
the main application of Kaczmarz's method is CT which we illustrate with
test problems from {\sc AIR Tools II} \cite{AIRtoolsII};
we also use a few simpler test problems from {\sc Regularization Tools} \cite{RegTools}.

We use the following notations.
We write $\| \cdot \|$ for the vector and matrix 2-norm,
$\| \cdot \|_{\rm F}$ for the Frobenius norm,
$\Box\trans, \, \Box^*$ for the transpose and complex conjugate of $\Box$,
and $\ex(\cdot)$ for expected value.
Moreover, $\ba_i\trans$ is the transpose of the $i$th row of matrix $A$,
$\be_i$ is the $i$th unit vector,
$I$ is the identity matrix of appropriate dimension,
$\bx_k$ is the $k$th iteration vector,
$\zero$ is the zero vector of appropriate dimension,
$\sigma$ is the standard deviation of the noise, and
$\sigma_{\min}(A)$ is the smallest singular value of $A$.
Finally,
$\rho(\cdot)$ is the spectral radius of a matrix (the largest eigenvalue in absolute value),
${\rm spec}(\cdot)$ denotes the spectrum of a square matrix,
and ${\rm diag}(\cdot)$ is a diagonal matrix whose elements are given by the argument.

\section{Setting the stage: Kaczmarz's method} \label{sec:stage} %----------------------------------------------

We start by recalling the Kaczmarz method and the related symmetric Kaczmarz method.
Given an initial vector $\bx_0$ (where the standard choice is $\bx_0 = \zero$ as we will discuss shortly), Kaczmarz's method performs cycles (or sweeps) of steps using the rows $\ba_i\trans$ of $A$.
This means that for $k=0, 1, \dots$ we carry out:
  \begin{align*}
    \bx_{k+1}^{(0)}   & = \bx_k, \\
    \bx_{k+1}^{(i+1)} & = \bx_{k+1}^{(i)} + \omega \ \frac{\bb-\ba_i\trans \bx_{k+1}^{(i)} }{\|\ba_i\|^2} \ \ba_i,
      \qquad i = 1, \dots, m, \\
    \bx_{k+1}         & = \bx_{k+1}^{(m)}
  \end{align*}
where $\omega$ is a relaxation parameter with $0 < \omega < 2$.
We refer to $\bx_k$ as the $k$th iteration vector.

Now decompose $AA\trans = \wh L + D + \wh L\trans$,
where $\wh L$ is the strictly lower triangular part and $D$ is the diagonal part of $A\, A\trans$.
One sweep of Kaczmarz's method can then be written as \cite{ENi09} (see also \cite{Bart})
  \begin{equation} \label{kaczmarz}
    \bx_{k+1} = \bx_k + A\trans L^{-1} \, (\bb-A\, \bx_k) \ , \qquad
    L = L_{\omega} = \wh L + \omega^{-1} \, D \ .
  \end{equation}
Here $L$ is nonsingular, since $A$ does not have any zero rows.
We can also write the iterative process as
\begin{equation} \label{Git}
\bx_{k+1} = G \ \bx_k + A\trans \, L^{-1} \, \bb
\end{equation}
with iteration matrix
  \[
    G = G_{\omega} := I - A\trans L^{-1} A \ .
  \]
Let $A = U \Sigma V\trans$ be the SVD of $A$, where $U \in \R^{m \times r}$, $\Sigma \in \R^{r \times r}$,
and $V \in \R^{n \times r}$, with $r = {\rm rank}(A)$.
Denote $\calv := {\rm range}(V) = {\rm range}(A\trans) = {\rm null}(A)^{\perp}$, and similarly $\calu := {\rm range}(U) = {\rm range}(A)$.
If $A$ is of full column rank, then $\calv = \R^n$ is the entire space.

It is important to notice that any component of $\bx_0$ that is in $\calv^{\perp} = {\rm null}(A)$ is not annihilated by multiplication with~$G$.
Conversely, if $\bx_0 \in \calv$, it follows that $\bx_k \in \calv$ for all $k$, as ${\rm range}(A\trans) = \calv$.
For this reason, as well as for the quality of the final solution, $\bx_0 = \zero$ is commonly chosen as a starting vector, which we will assume from now on.
Then the interesting action of $G$ takes place on $\calv$, and it suffices to consider the restriction $G\vert_{\calv}$ of $G$ to $\calv$.
The resulting operators $A\trans L^{-1} A\vert_{\calv}$ and $G\vert_{\calv}$ are maps from $\calv$ to $\calv$.

The map $A\trans L^{-1} A$ is invertible on $\calv$, and $A\trans L^{-1} A\vert_{\calv}$ has an inverse $(A\trans L^{-1} A\vert_{\calv})^{-1}$.
We note that $(A\trans L^{-1} A\vert_{\calv})^{-1}$ can be extended to the entire space $\R^n$ by defining it to be zero on $\calv^{\perp}$; this gives the pseudoinverse $(A\trans L^{-1} A)^+$.
However, the action on $\calv$ is what matters, and we do not need to consider this pseudoinverse.

It is well known that the Kaczmarz method converges for consistent linear systems \eqref{Ax=b} and $0 < \omega < 2$ (see, e.g., \cite{Pop18}); this is equivalent to $\rho(G\vert_{\calv}) < 1$.
We denote the fixed point of \eqref{Git} by $\fixp \in \calv$.
It is easy to see that it satisfies
\begin{equation} \label{ls2}
(A\trans L^{-1} \, A\vert_{\calv}) \ \fixp = A\trans L^{-1} \, \bb \, .
\end{equation}
It is shown in \cite[Lemma~2.2]{Bart} that for consistent systems, this is equivalent to \eqref{Ax=b}.
Therefore, the Kaczmarz method converges to the unique least-norm solution of \eqref{Ax=b}, which is in $\calv$ and orthogonal to the nullspace $\calv^{\perp}$:
  \begin{equation} \label{eq:fixpoint}
    \fixp = \Afix \, \bb \qquad \hbox{with} \qquad
    \Afix = (A\trans L^{-1} \, A\vert_{\calv})^{-1} \, A\trans L^{-1} \ .
  \end{equation}
In particular, we point out that although the iterations depend on $\omega$, the solution $\fixp$ is independent of $\omega$.
However, the speed of convergence generally depends on this parameter.
Elfving and Nikazad \cite[p.~5]{ENi09} remark on \eqref{ls2}
\begin{quote}
``these equations do not correspond to a gradient mapping. It follows that there is no underlying function which is minimized.''
\end{quote}
In other words, since $L^{-1}$ is not symmetric positive definite, these are not normal equations corresponding to a weighted least squares approach.

We can rewrite \eqref{kaczmarz} in the form
$\bx_{k+1}-\fixp = (I-A\trans L^{-1}A) \, (\bx_k-\fixp)$.
In terms of the error vector $\bff_k := \bx_k-\fixp$ this means
\begin{equation} \label{err}
\bff_{k+1} = (I-A\trans L^{-1}A) \ \bff_k.
\end{equation}
A necessary condition for convergence to the minimum-norm solution is that $\bff_0 \in \calv$;
since $\fixp \in \calv$, this is equivalent to $\bx_0 \in \calv$.

As a side note, we point out that for (very) ill-posed problems, rank decisions may be nontrivial when there are one of more tiny singular values, especially in the situation without a clear {\em gap} between small singular values.
Although for these problems determining the spaces $\calv$ and $\calu$ may not be well-posed, this has very little influence on the results of this paper (e.g., it may very slightly change the spectral radius $\rho(G\vert_{\calv})$ in experiments in Section~\ref{sec:eigen}).

There is a lesser known and used version, the {\em symmetric Kaczmarz} method,
in which one cycle consists of a downward sweep of the rows, followed by an upward sweep.
This approach has been proposed (without this name) in \cite[(4.1)]{BjEl79}; see also \cite{EN08}.
In the above framework we access the rows of $A$ in the order
$i=1,2,\ldots,m, \, m,m-1,\ldots,1$;
the double steps for $i=1$ and $i=m$ are redundant when $\omega=1$.
In \cite[Prop.~2.6]{Bart} it has been shown that an upsweep of the symmetric Kaczmarz method corresponds to the transpose operator
  \begin{equation} \label{upG}
    G\trans = I - A\trans \, L^{-\top} A
  \end{equation}
and therefore the symmetric Kaczmarz method has the iteration matrix
  \begin{align*}
    G_{\rm s} & = G_{\rm s}(\omega) := G\trans\, G = (I - A\trans \, L^{-\top} A) \, (I - A\trans \, L^{-1} A) \\
    & = I - A\trans \, L^{-1} A - A\trans \, L^{-\top} A + A\trans \, L^{-\top} \, (\wh L + \omega^{-1} \,D + \wh L\trans + \omega^{-1}\,D + (1-2\,\omega^{-1}) \, D) \, L^{-1} \, A \\
    & = I - (2\,\omega^{-1}-1) \, A\trans \, L^{-\top} \, D \, L^{-1} \, A.
  \end{align*}
Similar to \eqref{kaczmarz} we can therefore write the symmetric Kaczmarz iterations as
  \[
    \bx_{k+1} = \bx_k + A\trans S \ (\bb-A\, \bx_k)
  \]
where $S = (2\omega^{-1}-1) \, L^{-\top} \, D \, L^{-1}$;
see also \cite[Prop.~3.3]{EN08}.
The matrix $S$ is symmetric and positive definite, and hence the symmetric Kaczmarz method
is a member of the family of simultaneous iterative reconstruction technique (SIRT) type methods (see, e.g., \cite{ENH10}).
Just as the standard method, the symmetric Kaczmarz method converges for $0 < \omega < 2$ and $\bx_0 = \zero$ to the unique minimum-norm
solution, which in this symmetric case minimizes $\| S^{1/2} (A\,\bx-\bb) \|$ (cf.~\cite[Thm.~1.1]{ENH10}).
We are going to analyze the standard Kaczmarz method together with the symmetric variant in the next section.

\section{Eigenvalues of the iteration operator} \label{sec:eigen} %----------------------------------------------

In this section we study the eigenvalues of the iteration operator and their influence on the initial convergence.
We usually speak of ``operator'' rather than matrix, as it is the restriction $G\vert_{\calv}$ what is relevant.
A difference between $G$ and $G\vert_{\calv}$ is that the former may have extra eigenvalues equal to 1, which are irrelevant for the spectral radius $\rho(G\vert_{\calv})$.
We focus on noise-free data to obtain insight into the
iteration error, and we leave the study of the influence of data noise to Section~\ref{sec:stat}.
We present new results for several aspects of the convergence of the Kaczmarz method and illustrate them with examples.

\subsection[ ]{Spectral properties of $G$: introduction}
First, note that we can write
  \[
    \bx_k = A_k^{\#} \,  \bb
  \]
in which
  \begin{equation} \label{Ak}
    A_k^{\#} = (I + G + \cdots + G^{k-1}) \, A\trans L^{-1}
    = (I-G^k)\, (I-G)^{-1} \, A\trans L^{-1} = (I-G^k)\, \Afix \ ,
  \end{equation}
where $\Afix$ defines the fixed point; cf.~\eqref{eq:fixpoint}.
We make the (very mild) assumption that $G$ is diagonalizable, with eigenvalue decomposition
  \begin{equation} \label{eigG}
    G\vert_{\calv} = W\, \Lambda \, W^{-1}, \qquad \Lambda = {\rm diag}(\lambda_i) \ .
  \end{equation}
The restriction means here that we only consider $\calv$ as domain of $G$; the range is also subset of $\calv$.
This means that we compute the eigenvalue decomposition $V\trans GV = C \Lambda \, C^{-1}$, and then take $W := VC$.
Since $G$ is nonsymmetric, some eigenvalues and associated eigenvectors may be complex.
The eigenvalues have magnitudes less than 1, and hence
$A_k^{\#} \rightarrow \Afix$ for $k \rightarrow \infty$.

We denote the open unit disk by ${\cal Z} = \{ \, |z| < 1 \, : \, z \in \mathbb C \, \}$.
Since Kaczmarz's method converges, the eigenvalues of the operator $G\vert_{\calv}$ are inside $\calz$.
Typically, $G\vert_{\calv}$ has several (real) eigenvalues very close to 1, which makes the asymptotic convergence very slow.
In addition, $G\vert_{\calv}$ often has one or more eigenvalues (very) close to 0, which is the topic of Section~\ref{sec:zero}.
Then $A\trans L^{-1} A\vert_{\calv} = (I-G)\vert_{\calv}$ has the eigenvalue decomposition
  \begin{equation} \label{eq:ED}
    A\trans L^{-1} A\vert_{\calv} = W\, (I-\Lambda) \, W^{-1}.
  \end{equation}
As Kaczmarz's method converges for $0 < \omega < 2$, this implies $\rho(G\vert_{\calv}) < 1$.
However, $\rho(G\vert_{\calv})$ may be extremely close to 1; for very ill-conditioned $A$, it may even be numerically equal to 1, i.e., different from 1 by machine precision.
This explains the flat plateau for the iteration error that we regularly observe for Kaczmarz's method.

We first start with the following lemma, based on the well-known result that nonzero eigenvalues of $BC$ are equal to those of $CB$ for matrices of appropriate dimension (see, e.g., \cite{HJo85}).

\begin{lemma} \label{lem:eigAB}
For any $A \in \R^{m \times n}$ and nonsingular $L \in \R^{m \times m}$,
\[
{\rm spec}(A\trans L^{-1} A\vert_\calv)
= {\rm spec}(L^{-1} AA\trans \vert_\calu)
= {\rm spec}(A A\trans L^{-1}\vert_{L\calu}).
\]
\end{lemma}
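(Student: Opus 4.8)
The plan is to reduce everything to the standard fact that for rectangular matrices $B$ and $C$ of compatible size, $BC$ and $CB$ have the same nonzero eigenvalues (with multiplicities), and then to handle the ``$\vert_{\calv}$'' restrictions carefully so that the zero eigenvalues on the orthogonal complements do not cause trouble. First I would write $A\trans L^{-1} A = B\,C$ with $B = A\trans$ and $C = L^{-1}A$; then $CB = L^{-1}AA\trans$, so the cited result immediately gives that the nonzero spectra of $A\trans L^{-1}A$ and $L^{-1}AA\trans$ coincide. Similarly, writing $A\trans L^{-1}A = (A\trans L^{-1})(A)$ and swapping gives $A(A\trans L^{-1}) = AA\trans L^{-1}$, so the nonzero spectra of $A\trans L^{-1}A$ and $AA\trans L^{-1}$ also coincide. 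That already yields the three spectra agree away from $0$.

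The real content is the bookkeeping of the zero eigenvalue and the restrictions. The key observations are: (i) $A\trans L^{-1}A\vert_{\calv}$ is invertible on $\calv$ — this is stated in the excerpt just after \eqref{ls2} — so $0 \notin {\rm spec}(A\trans L^{-1}A\vert_{\calv})$, and likewise one should argue $0\notin {\rm spec}(L^{-1}AA\trans\vert_{\calu})$ and $0\notin{\rm spec}(AA\trans L^{-1}\vert_{L\calu})$; (ii) the ambient operators $A\trans L^{-1}A$ on $\R^n$, $L^{-1}AA\trans$ on $\R^m$, and $AA\trans L^{-1}$ on $\R^m$ have ranges contained in $\calv$, $\calu$, and $L\calu$ respectively, and each acts as zero on a complementary subspace, so their full spectra are just the spectrum of the restriction together with extra zeros. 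Combining (i), (ii), and the nonzero-eigenvalue equality then forces the three restricted spectra to be exactly equal. Concretely, I would show $\mathrm{range}(A\trans L^{-1}A)\subseteq\calv=\mathrm{range}(A\trans)$ and that $A\trans L^{-1}A$ annihilates $\calv^\perp=\mathrm{null}(A)$, hence ${\rm spec}(A\trans L^{-1}A\vert_{\calv}) = {\rm spec}(A\trans L^{-1}A)\setminus\{0\}$ as multisets (or exactly, given invertibility on $\calv$); do the analogous thing for the other two, using $\mathrm{range}(L^{-1}AA\trans)\subseteq L\calu$ is not quite what we want — rather $\mathrm{range}(L^{-1}AA\trans) = L^{-1}\mathrm{range}(AA\trans) = L^{-1}\calu$, so here one has to be a little careful that the natural invariant subspace for $L^{-1}AA\trans$ is $L^{-1}\calu$, not $\calu$; but the point is only that \emph{some} invariant complement carries the zeros, and the restriction to the non-kernel part has the same nonzero spectrum.

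I expect the main obstacle to be precisely this last subtlety: making sure the invariant subspace to which we restrict (namely $\calv$, $\calu$, and $L\calu$ as written in the statement) is genuinely $G$-invariant (here the relevant operator, not $G$ itself) and genuinely captures all the nonzero eigenvalues, so that passing from ``equal nonzero spectra'' to ``equal spectra of the restrictions'' is valid. For $A\trans L^{-1}A\vert_{\calv}$ this is clean because $\calv = \mathrm{range}(A\trans)$ is exactly the range and the operator is invertible there. For $L^{-1}AA\trans\vert_{\calu}$ one must check $L^{-1}AA\trans$ maps $\calu$ into $\calu$ — which is \emph{not} obvious and I suspect is actually false in general; more likely the intended reading is that ${\rm spec}(L^{-1}AA\trans\vert_{\calu})$ means the nonzero spectrum, or that one conjugates appropriately. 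I would resolve this by noting $L^{-1}AA\trans = L^{-1}(AA\trans)$ and $AA\trans\vert_{\calu}$ is an isomorphism of $\calu$; then $L^{-1}AA\trans$ restricted to $\calu$ lands in $L^{-1}\calu$, and the similarity $X\mapsto L X L^{-1}$ moves $L^{-1}AA\trans\vert_{?}$ to $AA\trans L^{-1}\vert_{L\calu}$, tying the second and third expressions together directly by an explicit similarity transformation rather than by the $BC$/$CB$ trick. So the cleanest route is: use $BC$–$CB$ once to connect the first expression to the second (up to zeros), use the explicit conjugation by $L$ to connect the second to the third, and use invertibility on the relevant subspaces to upgrade ``up to zeros'' to genuine equality. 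Getting the invariant-subspace statements exactly right is the part that needs care; everything else is a one-line application of a textbook fact.
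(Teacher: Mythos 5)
Your proposal is correct in substance and rests on the same textbook fact that the paper's proof uses --- the nonzero spectra of $BC$ and $CB$ coincide --- but you treat the restrictions with genuinely more care than the paper, whose entire argument is the one-line assertion that ``the restrictions are such that exactly the nonzero eigenvalues of the operators are selected.'' Two of your additions are worth keeping. First, your suspicion about invariance is right: $\calu$ is in general \emph{not} invariant under $L^{-1}AA\trans$; if $A\trans L^{-1}A\,\bw=\lambda\bw$ with $\lambda\neq 0$, then the corresponding eigenvectors of $L^{-1}AA\trans$ and of $AA\trans L^{-1}$ are $L^{-1}A\bw\in L^{-1}\calu$ and $A\bw\in\calu$, so the natural invariant subspaces carrying the nonzero eigenvalues are $L^{-1}\calu$ and $\calu$, respectively, and your explicit conjugation $X\mapsto LXL^{-1}$ is exactly what ties these two restrictions together (it carries $L^{-1}AA\trans$ on $L^{-1}\calu$ to $AA\trans L^{-1}$ on $\calu$); the subscripts in the lemma are best read through this similarity. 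So your route --- one application of the $BC$/$CB$ fact for the first equality, an explicit similarity by $L$ for the second, plus invariant-subspace bookkeeping --- buys a precise identification of where the nonzero eigenvectors live, which the paper's proof simply glosses over. Second, your step (i), ruling out $0$ from the restricted spectra so that ``equal nonzero spectra'' upgrades to equality of the restricted spectra, is the other load-bearing ingredient; note that it relies on the invertibility of $A\trans L^{-1}A$ on $\calv$, which for the Kaczmarz matrix $L$ follows from the positive definiteness of the symmetric part of $L^{-1}$ (as in the paper, $\bx\trans A\trans L^{-1}A\,\bx\geq\nu(L^{-1})\,\|A\bx\|^2>0$ for nonzero $\bx\in\calv$), and not from mere nonsingularity of $L$ as the lemma's hypothesis states --- a caveat your proof shares with the paper's, and one that is harmless for the way the lemma is used in Proposition~\ref{prop:equiv}.
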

\vspace{-5mm}
\begin{proof}
The nonzero eigenvalues of $BC$ and of $CB$ are identical; the restrictions are such that exactly the nonzero eigenvalues of the operators are selected.
\end{proof}
In addition, there is also a relation between the eigenvectors of the operators
in Lemma \ref{lem:eigAB}, which we will comment on in Section~\ref{sec:zero}.

Assuming a zero initial guess, a necessary and sufficient condition
for convergence to $\fixp$ is $\rho((A\trans L^{-1} A-I_n)\vert_\calv) < 1$.
This is the topic of the next proposition.
By $\calz+1$ we mean the shifted set $\{\,z+1 \,:\, z \in \calz\,\}$; this is the open disk with center and radius equal to 1.
For clarity, we add a subscript for the dimension of the identities in the proposition ($I_m$ and $I_n$).

\begin{figure}[htb!]
\centering
\includegraphics[width=0.50\linewidth]{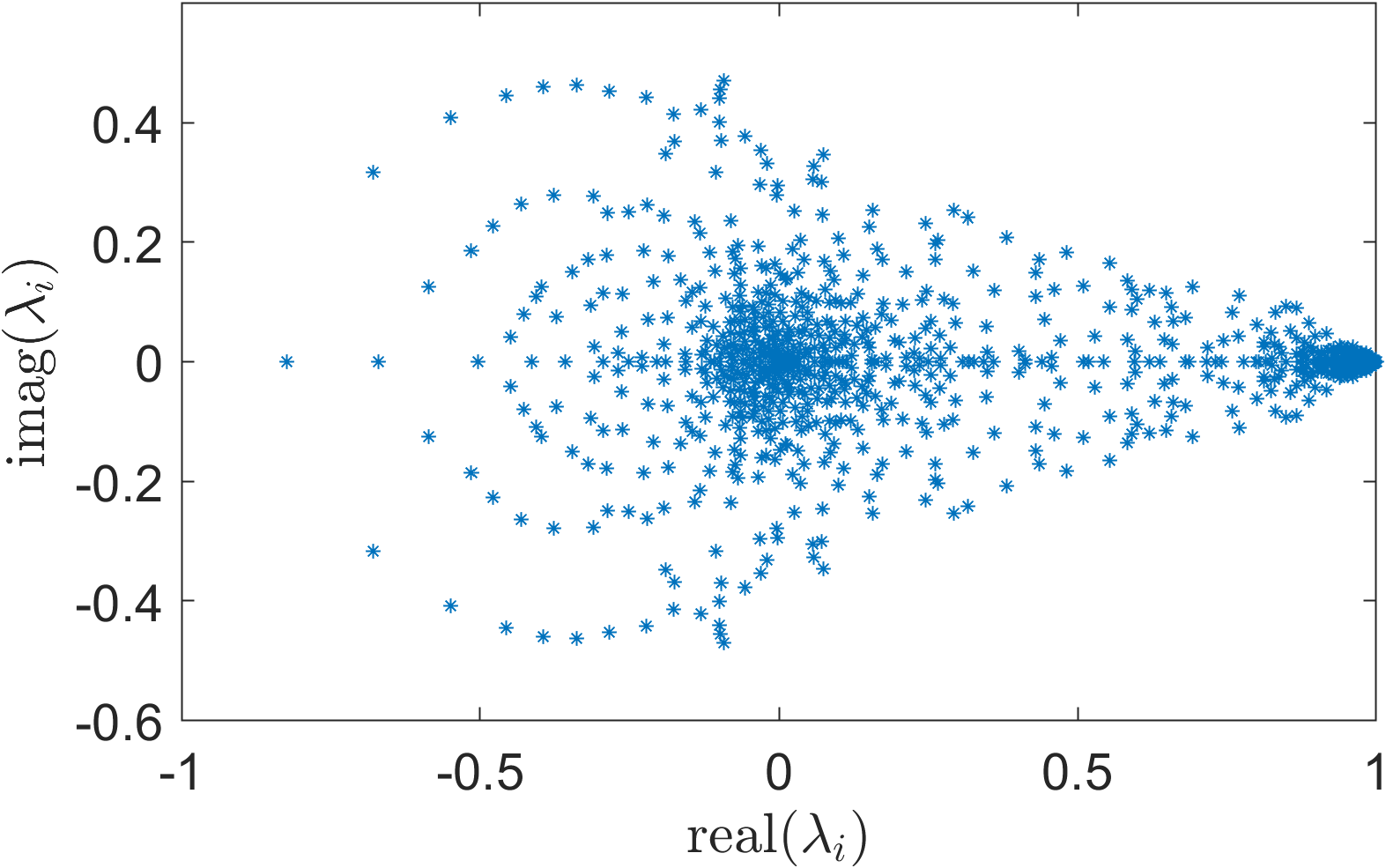}
\caption{A typical distribution of the eigenvalues of $G$ inside the complex unit disk for an X-ray CT problem.
The spectral radius here is $\rho(G) = 0.9999998937$.}
\label{fig:Evalplot}
\end{figure}

\begin{proposition} \label{prop:equiv}
Let $L \in \R^{m \times m}$ be nonsingular. Iteration \eqref{kaczmarz} converges to $\fixp$ for any $\bx_0 \in \calv^{\perp}$ if and only if one of the following equivalent conditions holds:
\begin{description} \vspace{-1mm} \itemsep=1mm
\item[(a)] $\rho((A\trans L^{-1} A-I_n)\vert_\calv) < 1$;
\item[(b)] $\rho((L^{-1}A A\trans-I_m)\vert_\calu) < 1$;
\item[(c)] $\rho((A A\trans L^{-1}-I_m)\vert_{L\calu}) < 1$;
\item[(d)] $\rho((AA\trans- L)\vert_\calu, \  L\vert_\calu) < 1$;
\item[(e)] ${\rm spec}(AA\trans\vert_\calu, \  L\vert_\calu) \subseteq {\cal Z}+1$.
\end{description}
\end{proposition}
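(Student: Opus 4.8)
The plan is to prove the chain of equivalences by first establishing (a)$\Leftrightarrow$(b)$\Leftrightarrow$(c) via Lemma~\ref{lem:eigAB}, then handling the generalized-eigenvalue reformulations (d) and (e), and finally connecting condition (a) to actual convergence of \eqref{kaczmarz}. The spectral-radius conditions (a), (b), (c) should come essentially for free: Lemma~\ref{lem:eigAB} gives ${\rm spec}(A\trans L^{-1}A\vert_\calv) = {\rm spec}(L^{-1}AA\trans\vert_\calu) = {\rm spec}(AA\trans L^{-1}\vert_{L\calu})$, and since $\rho(M-I) = \max_\lambda |\lambda - 1|$ depends only on the spectrum of $M$, subtracting the appropriate identity and taking spectral radii of the shifted operators preserves the equivalence. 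Note the restrictions are to $\calv$, $\calu$, $L\calu$ respectively, so the dimension bookkeeping (hence the subscripts $I_n$ vs.\ $I_m$) matters but is routine.

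For (d) and (e), the key observation is that $AA\trans L^{-1}\vert_{L\calu}$ is a representation of the pencil $(AA\trans,\,L)$ restricted to $\calu$: if $(\lambda,\bx)$ with $\bx\in\calu$ solves $AA\trans\bx = \lambda\, L\,\bx$, then setting $\by = L\bx \in L\calu$ gives $AA\trans L^{-1}\by = \lambda\,\by$, and conversely. So ${\rm spec}(AA\trans\vert_\calu,\,L\vert_\calu) = {\rm spec}(AA\trans L^{-1}\vert_{L\calu})$; then (e) says this spectrum lies in $\calz+1$, i.e.\ every eigenvalue $\lambda$ satisfies $|\lambda - 1| < 1$, which is precisely $\rho((AA\trans L^{-1} - I_m)\vert_{L\calu}) < 1$, giving (c)$\Leftrightarrow$(e). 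The same pencil identity, after subtracting the identity (equivalently, rewriting $AA\trans\bx = \lambda L\bx$ as $(AA\trans - L)\bx = (\lambda-1)L\bx$), yields $\rho((AA\trans-L)\vert_\calu,\,L\vert_\calu) = \rho((AA\trans L^{-1} - I_m)\vert_{L\calu})$, so (c)$\Leftrightarrow$(d).

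Finally, for the equivalence with convergence: iteration \eqref{kaczmarz} has iteration matrix $G = I - A\trans L^{-1}A$, and by \eqref{err} the error obeys $\bff_{k+1} = G\,\bff_k$. For $\bx_0 \in \calv^\perp$ we have $\bff_0 = \bx_0 - \fixp$, whose $\calv$-component is $-\fixp$ (note $\calv^\perp = {\rm null}(A)$ is fixed by $G$, so that part never decays — hence one must read the statement as convergence of the $\calv$-component, consistent with the paper's standing convention that the relevant action is $G\vert_\calv$); convergence $\bx_k \to \fixp$ on $\calv$ holds for arbitrary starting data iff $G^k\vert_\calv \to 0$ iff $\rho(G\vert_\calv) < 1$, and $G\vert_\calv = (I_n - A\trans L^{-1}A)\vert_\calv = -\,(A\trans L^{-1}A - I_n)\vert_\calv$, so $\rho(G\vert_\calv) = \rho((A\trans L^{-1}A - I_n)\vert_\calv)$, which is exactly (a).

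The main obstacle I anticipate is not any single computation but getting the restricted-operator and pencil formalism airtight: one must be careful that $L$ maps $\calu$ to $L\calu$ bijectively (true since $L$ is nonsingular), that the restricted pencil $(AA\trans\vert_\calu, L\vert_\calu)$ is regular and its spectrum is well-defined (here $AA\trans\vert_\calu$ is invertible and $L\vert_\calu$ maps $\calu$ into $L\calu$, not back into $\calu$, so "pencil on $\calu$" really means the pencil $AA\trans - zL$ acting $\calu \to L\calu$), and that the nonzero-eigenvalue correspondence of Lemma~\ref{lem:eigAB} genuinely matches eigenvectors across the three forms, so that "restricted to the relevant invariant subspace" is consistent throughout. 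Once that language is fixed, each individual equivalence is a one-line verification.
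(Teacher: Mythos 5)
Your proposal is correct and follows essentially the same route as the paper, whose entire proof is the one-liner that everything ``follows directly from Lemma~\ref{lem:eigAB}; the identity only performs a shift''; you simply spell out the shift argument for (a)--(c), the pencil correspondence $AA\trans\bx=\lambda L\bx \Leftrightarrow AA\trans L^{-1}(L\bx)=\lambda(L\bx)$ for (d)--(e), and the standard fixed-point fact $\bx_k\to\fixp$ on $\calv$ iff $\rho(G\vert_\calv)<1$ for the link to convergence. Your parenthetical remark about $\bx_0\in\calv^{\perp}$ versus the action on $\calv$ correctly identifies a looseness in the statement itself (the paper's earlier discussion indicates $\bx_0\in\calv$ is what is meant), and is a reasonable way to read it.
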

\vspace{-5mm}
\begin{proof}
These properties follow directly from Lemma~\ref{lem:eigAB}; the identity only
performs a shift.
\end{proof}
\noindent

Figure~\ref{fig:Evalplot}, illustrating Proposition~\ref{prop:equiv}, shows a typical distribution of the eigenvalues of $G$; this plot is for an X-ray CT problem.
We will consider spectra like these in the next subsection.

\subsection[ ]{Rapid initial convergence and zero eigenvalues of $G$} \label{sec:zero}
This subsection is central in understanding the spectral properties investigated in Section~\ref{sec:eigen}.
We now address the (long-time) open problem of explaining the fast initial convergence of Kaczmarz's method, as is often observed and appreciated by practitioners.
We point out in this section that (near) zero eigenvalues of $G$ are key to understand this behavior, and that we have a simple expression for one of more corresponding eigenvectors for $\omega=1$.
We explain why the zero eigenvalue may have a multiplicity larger than 1, and why there may additionally be near-zero eigenvalues, also for $\omega \ne 1$.

First, we point out that $G$ has a zero eigenvalue for $\omega = 1$.
Denote $L_1$ for the case $\omega=1$ in $L = L_{\omega}$.

\begin{proposition} \label{prop:zero}
Let $\omega = 1$.
Then $G = I-A\trans L_1^{-1} A$ has a zero eigenvalue with corresponding eigenvector $\ba_1 = A\trans \be_1$, and left eigenvector $\ba_n = A\trans\be_n$.
\end{proposition}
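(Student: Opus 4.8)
The plan is to verify the two eigenvector identities by direct substitution, using the decomposition $AA\trans = \wh L + D + \wh L\trans$ which, for $\omega = 1$, reads $AA\trans = L_1 + \wh L\trans$ with $L_1 = \wh L + D$ lower triangular and $\wh L\trans$ strictly upper triangular. Two elementary facts about $\wh L\trans$ drive everything: its first column is the zero vector, and its last row is the zero vector. I also note at the outset that $\ba_1$ and $\ba_n$ are nonzero, since $A$ has no zero rows, and that both lie in $\calv = {\rm range}(A\trans)$, so they are genuine eigenvectors of the restriction $G\vert_\calv$ and not merely of the ambient matrix $G$.

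For the right eigenvector I compute $A\,\ba_1 = AA\trans\be_1 = (L_1 + \wh L\trans)\,\be_1 = L_1\be_1$, the last step because $\wh L\trans\be_1$ is the (vanishing) first column of $\wh L\trans$. Multiplying on the left by $A\trans L_1^{-1}$ gives $A\trans L_1^{-1}A\,\ba_1 = A\trans L_1^{-1}L_1\be_1 = A\trans\be_1 = \ba_1$, hence $G\,\ba_1 = \ba_1 - A\trans L_1^{-1}A\,\ba_1 = \zero$.

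For the left eigenvector I transpose the same idea. Writing $\ba_n\trans = \be_n\trans A$, I get $\ba_n\trans A\trans = \be_n\trans AA\trans = \be_n\trans(L_1 + \wh L\trans) = \be_n\trans L_1$, the last step because $\be_n\trans\wh L\trans$ is the (vanishing) last row of $\wh L\trans$. Multiplying on the right by $L_1^{-1}A$ gives $\ba_n\trans A\trans L_1^{-1}A = \be_n\trans L_1 L_1^{-1}A = \be_n\trans A = \ba_n\trans$, hence $\ba_n\trans G = \ba_n\trans - \ba_n\trans A\trans L_1^{-1}A = \zero\trans$.

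There is no real obstacle: the computation is two short lines in each direction. The only care points are bookkeeping ones — keeping straight that the right eigenvector exploits the vanishing \emph{first column} of $\wh L\trans$ while the left eigenvector exploits the vanishing \emph{last row} — together with the remark that the two claimed vectors are nonzero and lie in $\calv$. It is worth flagging that the cancellation is special to $\omega = 1$: for general $\omega$ one has instead $AA\trans = L_\omega + \wh L\trans + (1-\omega^{-1})D$, and the extra diagonal term spoils the cancellation, which is exactly why for $\omega \ne 1$ one expects only \emph{near}-zero rather than exactly zero eigenvalues, consistent with the surrounding discussion. (One could also deduce merely the \emph{existence} of the zero eigenvalue from Lemma~\ref{lem:eigAB} by observing that $\be_1$ is an eigenvector of $L_1^{-1}AA\trans$ for the eigenvalue $1$, but this does not identify the eigenvector and the direct argument above is cleaner.)
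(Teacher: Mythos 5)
Your proof is correct and follows essentially the same route as the paper: both hinge on the identities $AA\trans\be_1 = L_1\be_1$ and $\be_n\trans AA\trans = \be_n\trans L_1$ (the paper states the latter in transposed form, $AA\trans\be_n = L_1\trans\be_n$, verifying that $\ba_n$ is a zero eigenvector of $G\trans$, which is the same computation as your row-vector check of $\ba_n\trans G = \zero\trans$). Your added remarks—that $\ba_1,\ba_n$ are nonzero and lie in $\calv$, and that the cancellation breaks for $\omega\ne 1$—are accurate but not needed beyond what the paper does.
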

\vspace{-6mm}
\begin{proof}
Since $AA\trans \be_1 = L_1 \, \be_1$, we have
  \[
    A\trans L_1^{-1} \, A \, (A\trans \be_1) = A\trans L_1^{-1} \, (L_1 \, \be_1)  = A\trans \be_1 \ .
  \]
Similarly, $AA\trans \be_n = L_1\trans \be_n$, which means that
$A\trans L_1^{-\top} \, A \, (A\trans \be_n) = A\trans \be_n$.
\end{proof}

In other words, the first row $\ba_1$ is a zero eigenvector of the downward sweep operator $G$, and the last row $\ba_n$ an eigenvector of the upsweep operator $G\trans$.
As a side note, related to the sentence after Lemma~\ref{lem:eigAB}, we have that $\be_1$ is an eigenvector of $L_1^{-1} AA\trans$ and the pencil $(AA\trans, L_1)$ corresponding to eigenvalue $1$;
$L_1\be_1$ is the associated eigenvector of $AA\trans L_1^{-1}$.

A zero eigenvalue of $G$ means that the corresponding mode converges after one iteration (sweep):
it is a direction in which the iterations $\bx_k$ converge to the fixed point $\fixp = (A\trans L^{-1} \, A)^{-1} \, A\trans \bb$ in a single step, as follows.
Considering \eqref{eigG}, let (without loss of generality) $\bw_1 = \ba_1$ denote the eigenvector corresponding to eigenvalue 0 of $G$.
Decompose the initial error $\bff_0 = \bx_0-\fixp = \sum_i \gamma_i \, \bw_i$.
Then the error component of $\bff_1 = G\,\bff_0$ in the direction of $\bw_1$ is zero; it is annihilated after the very first step.

We can make the following first key observation.
\begin{quote}
\fbox{\parbox[t]{15cm}{
The Kaczmarz method with (default value) $\omega=1$ makes
rapid immediate progress if the fixed point $\fixp$ has a considerable component in the direction of $\ba_1$.
Therefore, the speed of the initial convergence depends particularly on $\fixp$ and $\ba_1$, and hence the ordering of the rows.
}}
\end{quote}

For some problems, $G$ may have not only one, but several zero eigenvalues for $\omega=1$, particularly for CT applications.
We now explain the reason for this and illustrate it with an example.

Since $L_1$ is the lower triangular part of $AA\trans$, it is easy to see that for $2 \le j \le n$
\begin{equation} \label{extra-terms}
AA\trans \be_j = L_1\,\be_j + \sum_{i<j} \, (\ba_i\trans\ba_j) \ \be_j.
\end{equation}
For some problems, especially from CT type of problems, several rows of $A$ are {\em structurally orthogonal}, i.e., they have mutual disjoint sets of indices of nonzero elements.
This means that for several (or even many) rows $\ba_i$ and $\ba_j$ we have $\ba_i\trans\ba_j = 0$ (even in finite precision arithmetic, this quantity is {\em exactly} zero).
This means that $\ba_2$ is an eigenvector corresponding to eigenvalue zero when $\ba_1^T\ba_2 = 0$;
furthermore, $\ba_3$ is an eigenvector corresponding to eigenvalue zero when $\ba_1^T\ba_3 = \ba_2^T\ba_3 = 0$, etc.

\begin{example} \rm
We illustrate this, and the influence of row orderings of $A$ with an example from X-ray CT, which is available as
the test problem \textsf{paralleltomo} from {\sc AIR Tools II} \cite{AIRtoolsII}.
We use a $32 \times 32$ image, 32 projection angles, and $32$ X-rays per angle;
this gives a square matrix $A$ with $m=n=1024$ and it has full rank.
The corresponding matrix $G$ has several (numerically) zero eigenvalues.
For the default row ordering the initial principal submatrix of $AA\trans$ is sparse; this is caused by many rows being
structurally orthogonal (due to the positions of the nonzeros).
In Figure~\ref{fig:influence} we see that the $20 \times 20$
leading submatrix is even diagonal (blue sparsity pattern plot).
In view of \eqref{extra-terms}, this explains the many zero eigenvalues (blue graph displaying $|\lambda_i|$).
We also form a matrix $\wt A$ by a random permutation of the rows of $A$.
The matrix $\wt AA\trans$ (red plot) is still a bit sparse, but less so, and this has the effect of still many but fewer (numerically) zero eigenvalues.
Indeed, the convergence in the first few steps with both row orderings is very rapid.

\begin{figure}[htb!]
\centering
\includegraphics[width=0.50\linewidth]{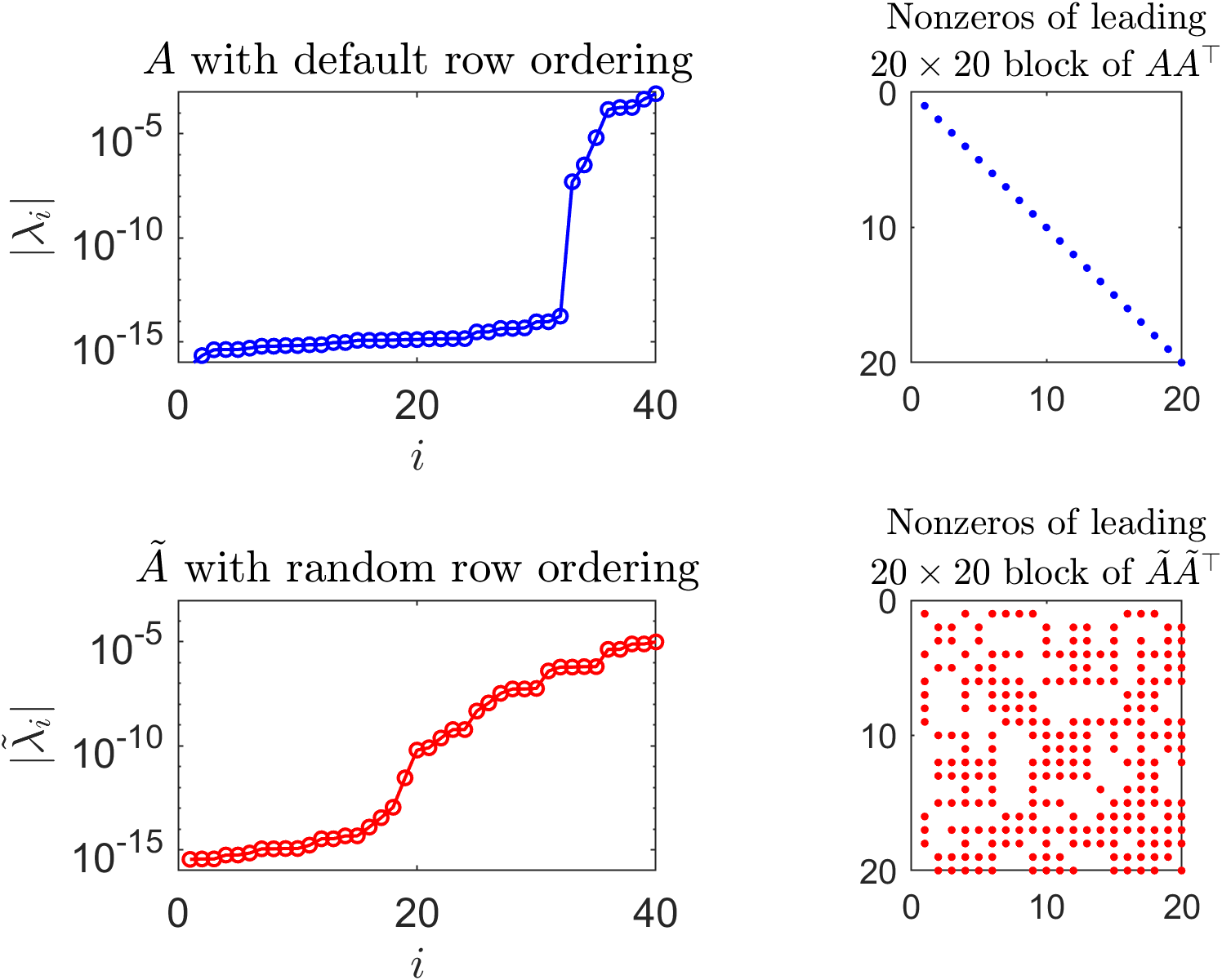} \quad
\includegraphics[width=0.45\linewidth]{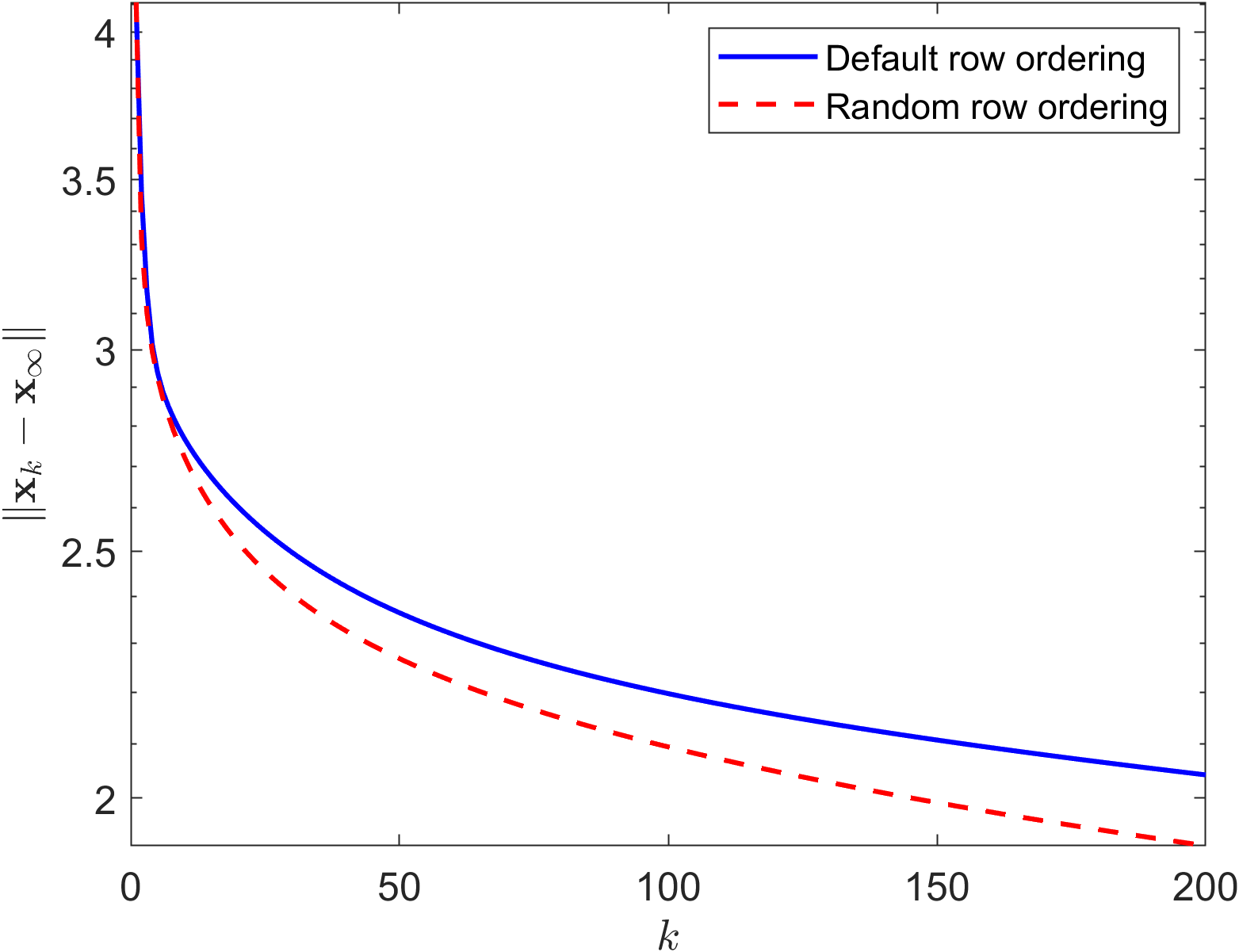}
\caption{CT test problem (see text for details).
Left:\ the small eigenvalues of $G$ and $\wt G$ corresponding, respectively, to
the matrices $A$ and $\wt A$ with default and random row ordering.
Middle:\ the nonzeros of the leading submatrices of $AA\trans$ and $\wt A\wt A\trans$.
Right:\ the error histories for the two row orderings.}
\label{fig:influence}
\end{figure}

The error histories for noise-free data are shown in the same figure.
The zero eigenvalues are of big influence for the initial convergence, but less so for the asymptotic convergence, where the spectral radius is important.
A permutation generally also changes this quantity:
$\rho(G) \approx 1-1.1 \cdot 10^{-7}$ and $\rho(\wt G) \approx 1-1.1 \cdot 10^{-6}$.
Indeed, the (slightly) smaller spectral radius of the randomized order $\wt G$ is slightly faster after 200 iterations.
\end{example}

In addition, several deterministic row orderings have been proposed such that  subsequent rows of $A$ are {\em nearly} orthogonal, as this is usually favorable for successive projection methods such as Kaczmarz.
Examples can be found in \cite{Pop99, Gor06} and \cite[App.~A]{SHa14}.
Effects of several types of randomized row orderings has been reviewed in \cite{FAM24}.

Also when the first rows are not structurally orthogonal, $G$ may have near-zero eigenvalues, which we will explain now.
The first row $\ba_1$ is always a ``zero eigenvector'';
the second row when $\ba_1^T\ba_2 = 0$.
When $|\ba_1\trans\ba_2|$ is nonzero but small (which may often be the case for ``good orderings'' in the Kaczmarz method), then $\ba_2$ is an {\em approximate} eigenvector of $G$ corresponding to an eigenvalue close to 0, because of the following.
Under the very mild assumption that $L_1^{-1} AA\trans$ is diagonalizable, the Bauer--Fike Theorem (see, e.g., \cite[Thm.~3.6]{Saa11}) guarantees the existence of an eigenvalue $\lambda$ of $G$ with
\[
|\lambda| \le \kappa(X) \cdot |\ba_1\trans\ba_2| \cdot \|L_1^{-1} \be_1\|,
\]
where $\kappa(X)$ is the condition number of the eigenvector matrix $X$ of $L_1^{-1} AA\trans$.
Therefore, apart from the zero eigenvalue corresponding to eigenvector $\ba_1$, there is another eigenvalue with upper bound on the absolute value which scales linearly with $|\ba_1\trans\ba_2|$.
Good row orderings may therefore have favorable effects, not only for the asymptotic convergence (smaller $\rho(G\vert_{\calv})$), but also the initial stage (near-zero eigenvalues).

Next, we turn our attention to the situation $\omega \ne 1$. In this case it is generally not guaranteed that there is a zero eigenvalue of $G$.
However, there are several examples where we have one or more zero eigenvalues for a wide range of $\omega$ around 1; see Figure~\ref{fig:many}.

\begin{figure}[htb!]
\centering
\includegraphics[width=0.4\linewidth]{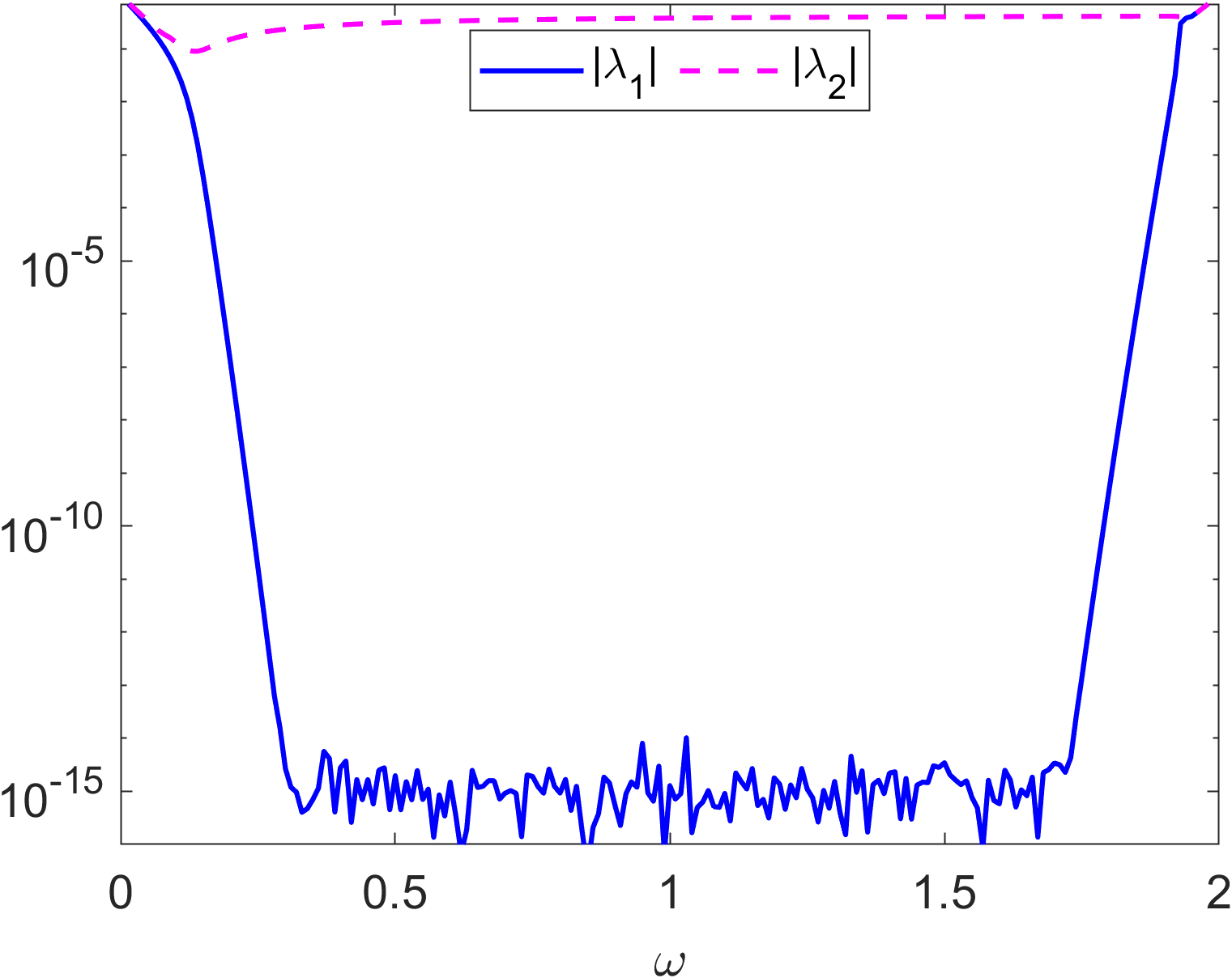} \quad
\includegraphics[width=0.4\linewidth]{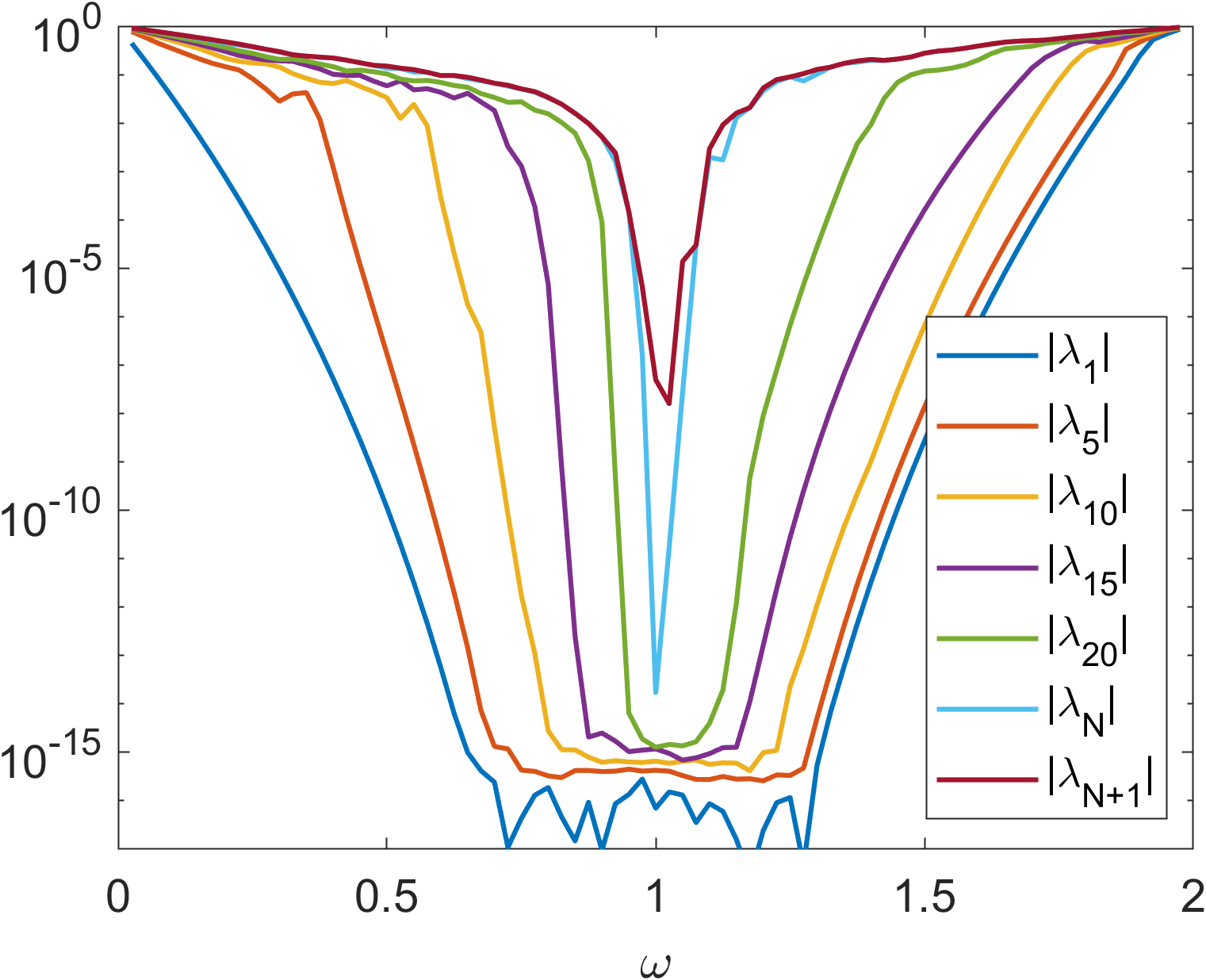}
\caption{Problems {\sf gravity} (${\mathtt d} = 0.01$ and $n=128$) and {\sf paralleltomo}
(the image size is $32\times 32$, and $A$ is $1024 \times 1024$), with, respectively, one zero eigenvalue, and several zero eigenvalues for an interval around $\omega=1$.}
\label{fig:many}
\end{figure}

The left figure is obtained from {\sf gravity}, the 1D gravity surveying problem from \cite{RegTools}.
This problem includes a parameter \texttt{d} which can be used to control the condition number of $A$.
The matrix is $128 \times 128$ and the singular values decay approximately as $e^{-0.7i}$.
The resulting $G$ has a numerically zero eigenvalue for a wide range of $\omega$-values.
The default row ordering in {\sf paralleltomo}, the right graph, is such that the leading $30 \times 30$ submatrix of $AA\trans$ is diagonal, meaning that many of the initial rows are structurally orthogonal.
This results in a favorable situation (cf.~\eqref{extra-terms}): a zero eigenvalue of high multiplicity for $\omega=1$, with also similar behavior for neighboring $\omega$-values.
The reason for the large plateau of the zero eigenvalue for {\sf gravity} (for $\omega \in [0.4, \, 1.6]$) is still an open problem.

We can also consider eigenvalue perturbation theory for more understanding for the case $\omega \approx 1$.
Since eigenvalues are continuous functions of the elements of the matrix, we know that for $\omega \approx 1$, there will be an eigenvalue near zero.
To be more precise, for $B, C \in \R^{m \times m}$, define the following {\em backward error} for the second argument of a matrix pencil:
\[
\eta(B, C) = \min \, \{ \, \eps \, : \, (B, \, C +\eps \, E) \ {\rm has~an~eigenvalue~1}, \ \|E\|=1 \, \}.
\]
A backward error is useful here to quantify how far a pencil is from a pencil having an eigenvalue equal to 1, such that the $G$-operator has a zero eigenvalue.

\begin{proposition}
We have $\eta(AA\trans, \, L_{\omega}) \le |1-\omega^{-1}| \cdot \max_i \|\ba_i\|^2$.
\end{proposition}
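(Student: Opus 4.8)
The plan is to exhibit one explicit admissible perturbation of the second pencil argument whose $2$-norm matches the claimed bound and which yields a pencil with eigenvalue $1$; the inequality then follows because $\eta$ is an infimum over all such perturbations. The key starting observation is that the case $\omega=1$ is degenerate: in the proof of Proposition~\ref{prop:zero} we saw that $AA\trans\be_1=L_1\be_1$, so $1\in{\rm spec}(AA\trans, L_1)$; equivalently, $AA\trans-L_1=\wh L\trans$ is strictly upper triangular, hence singular, so $\det(AA\trans-L_1)=0$. In particular, if $\omega=1$ the claim is immediate, the right-hand side being $0$, so from now on assume $\omega\neq 1$.

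Next I would connect $L_\omega$ to $L_1$. From $L_\omega=\wh L+\omega^{-1}D$ and $L_1=\wh L+D$ one reads off
\[
L_1 - L_\omega = (1-\omega^{-1})\,D ,
\]
so adding $(1-\omega^{-1})D$ to $L_\omega$ turns the pencil $(AA\trans, L_\omega)$ into $(AA\trans, L_1)$, which has eigenvalue $1$ by the previous paragraph. To put this in the normalized form $\eps E$ with $\|E\|=1$ appearing in the definition of $\eta$, set $E:=\operatorname{sign}(1-\omega^{-1})\,D/\|D\|$ and $\eps:=|1-\omega^{-1}|\cdot\|D\|$; since $A$ has no zero rows, $D\neq\zero$, so $E$ is well defined with unit $2$-norm. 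Then $\eps E=(1-\omega^{-1})D$ and $(AA\trans,\,L_\omega+\eps E)=(AA\trans,L_1)$ has eigenvalue $1$, whence $\eta(AA\trans,L_\omega)\le\eps$.

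It remains to evaluate $\|D\|$. Since $D$ is the diagonal part of $AA\trans$, its $i$th diagonal entry is $\ba_i\trans\ba_i=\|\ba_i\|^2$, and a nonnegative diagonal matrix has $2$-norm equal to its largest entry, so $\|D\|=\max_i\|\ba_i\|^2$. Substituting gives $\eta(AA\trans,L_\omega)\le|1-\omega^{-1}|\cdot\max_i\|\ba_i\|^2$, as claimed.

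I do not expect a genuine obstacle: the argument is short once one identifies $D$ as the right perturbation direction. The one point worth double-checking is that the perturbed pencil truly has eigenvalue $1$, i.e.\ that $AA\trans-L_1$ is singular, which is immediate from its strictly upper triangular structure (or from the explicit eigenvector $\be_1$). As a remark one could note that this bound need not be tight — only the \emph{singularity} of $\wh L\trans$, not the precise perturbation, is exploited — and that one could in fact identify the exact backward error as $\sigma_{\min}\big((1-\omega^{-1})D+\wh L\trans\big)$; but that refinement is not needed for the stated proposition.
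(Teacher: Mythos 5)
Your proof is correct and follows essentially the same route as the paper: you write $L_1-L_\omega=(1-\omega^{-1})D$, use that $(AA\trans,L_1)$ has eigenvalue $1$ (Proposition~\ref{prop:zero}), and note $\|D\|=\max_i\|\ba_i\|^2$. You simply spell out the normalization of the perturbation and the $\omega=1$ case more explicitly than the paper's one-line argument.
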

\vspace{-6mm}
\begin{proof}
This is a consequence of
$L_{\omega} = \wh L + \omega^{-1} \, D = (\wh L + D) + (\omega^{-1}-1) \, D$.
This means that $(AA\trans, \, L_{\omega})$ is at most $|1-\omega^{-1}| \cdot \|D\|$ away in the second matrix argument from a pencil with eigenvalue 1.
\end{proof}

We now invoke the general well-known first-order upper bound, as a product of the backward error and the condition number.
This gives that, up to second-order terms in $|1-\omega^{-1}|$, there is an eigenvalue $\lambda$ of $G = G_{\omega}$ such that
\[
|\lambda| \lesssim \kappa(\lambda) \cdot |1-\omega^{-1}| \cdot \max_i \|\ba_i\|^2.
\]
Here, $\kappa(\lambda)$ is the eigenvalue condition number of the eigenvalue $0$ for $\omega=1$, with respect to changes in the second matrix argument.
The quantity $\kappa(\lambda)$ is not information that is computed or approximated by the Kaczmarz method, but we see that especially if $\kappa(\lambda)$ is modest, it is likely that $G_{\omega}$ has a near-zero eigenvalue for $\omega$ close to 1.

To summarize, the Kaczmarz method often has fast initial progress because of (near-)zero eigenvalues, of which there may be several, partly due to good row orderings.

\subsection{Three stages of the iterations, and three factors of influence}
We now give a (new) concise summary of the main effects that influence the convergence of Kaczmarz's method in its various phases.
As an iterative process, we may distinguish three stages of the Kaczmarz iterations:
\mh{
\begin{itemize}
\item The {\em initial} behavior: the first few steps, which is the focus of this paper.
As seen in Section~\ref{sec:zero}, the (nearly) zero eigenvalues are of great importance here.
\item The {\em asymptotic phase}: after (say) $100$ iterations, where the spectral radius is the main factor that determines the behavior.
\item The {\em transient} stage: iterations (say) 10--100, is the phase after the first few iterations and before the typical asymptotic behavior sets in.
Transient behavior is an interesting and challenging topic (cf., e.g., \cite{TEm20} for a study for the matrix exponential), and we will leave it for future research.
In the presence of noise, this stage is often less important in practice, because of the semi-convergence phenomenon, but in this section we study the noise-free case of \eqref{Ax=b}.
\end{itemize}}
We can also list the following three factors that influence the convergence.
None of these factors change the fixed point $\fixp$, but they generally have a (sometimes considerable) effect on the speed of convergence.
It is noteworthy that the impact of these three factors is usually noticeable in both the initial and the asymptotic behavior.
\begin{itemize}
\item The {\em row ordering:} we would like to point out that, interestingly, this ordering has effect on both the early and asymptotic stages.
In view of Proposition~\ref{prop:zero}, the initial convergence for $\omega \approx 1$ is strongly related to the component of $\fixp$ in the direction of the first row $\ba_1$, and possibly other zero eigenvectors.
It will also generally have impact on the spectral radius $\rho(G\vert_{\calv})$ with resulting asymptotic convergence.
Although always $\rho(G\vert_{\calv}) < 1$ for any row ordering (as Kaczmarz is a convergent method), the precise value may depend on the row ordering of $A$. In principle this is relevant for the asymptotic speed of convergence, but since the right-hand side is usually noisy, the practical influence is limited due to the semi-convergence.
\item The {\em relaxation parameter $\omega$:} this effect has been studied extensively in literature.
In the previous subsection we have seen the new observation that $\omega=1$ leads to one or more zero eigenvalues, which is favorable in the first iterations. It also has effect on the spectral radius and the associated asymptotic convergence.
While $\omega=1$ (which is the default value in \textsc{AIR Tools II} \cite{AIRtoolsII}) may not always be optimal for asymptotic convergence, it is never a poor choice in practice.
We will discuss small values of $\omega$ in Section~\ref{sec:small}.
\item The {\em true solution} $\fixp$: for solutions with large components in $\ba_1$ or other (near) zero eigenvectors, the initial convergence will be rapid.
On the other hand, if $\fixp$ is in the direction of the eigenvector corresponding to the eigenvalue associated with $\rho(G\vert_{\calv})$, then both the initial and asymptotic convergence will be very slow.
\end{itemize}
We illustrate these observations with the following examples.

\begin{figure}[htb!]
\centering
\includegraphics[width=0.35\linewidth]{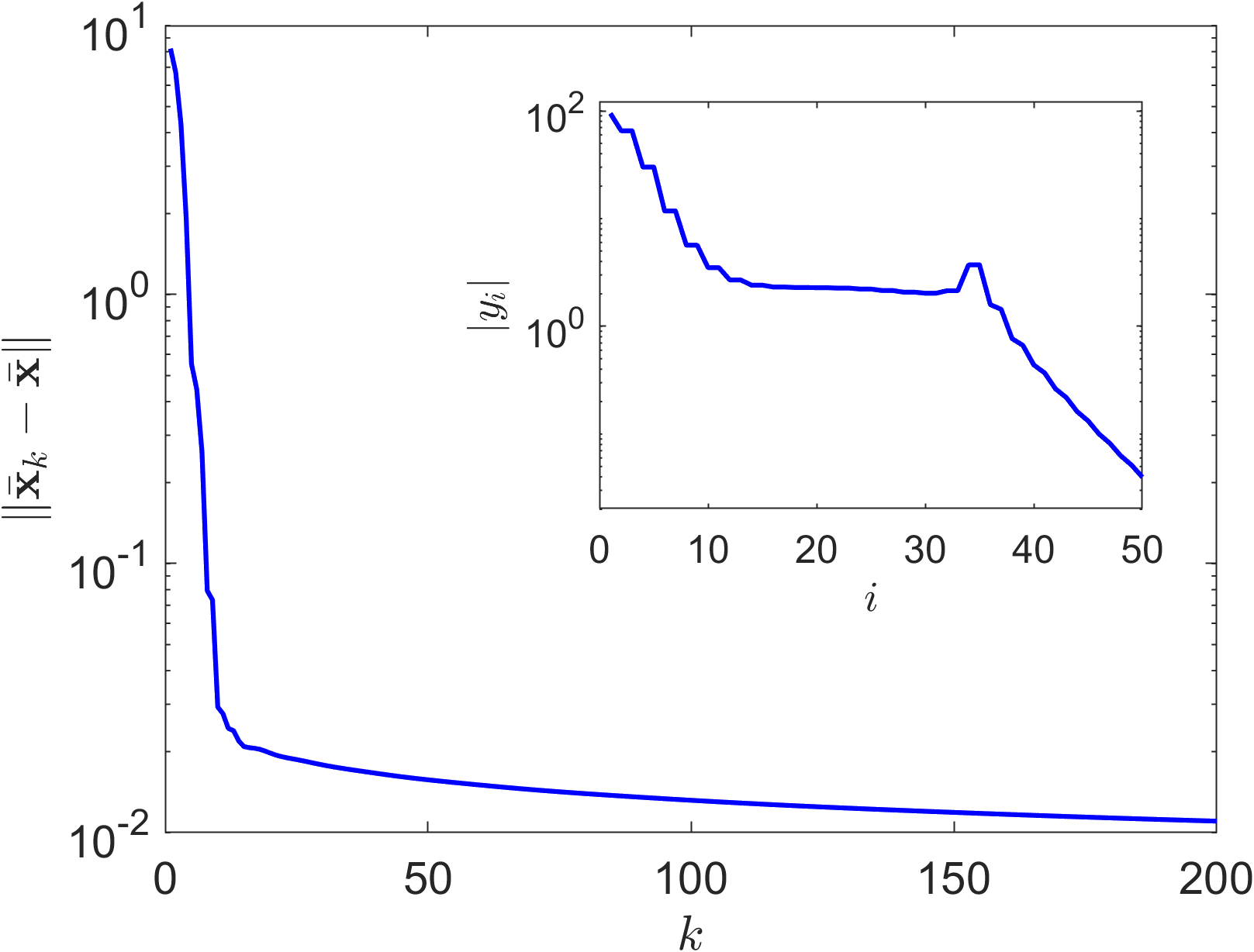}
\caption{Convergence of the Kaczmarz methods for the \textsf{gravity} test problem with ${\mathtt d}=0.06$
and $n=128$. The inset plot shows the absolute values of the elements in $\by = W^{-1} \fixp$, i.e., the
coefficients of the solution in the eigenvector basis.
The fast initial convergence is explained by the fast decay of $|y_i|$.}
\label{fig:Insight}
\end{figure}

\begin{example} \rm
In Figure~\ref{fig:Insight}, the initial convergence is very rapid.
The reason is that the standard solution $\fixp$ provided by \cite{RegTools} has considerable components in the direction of the smallest eigenvectors.
\end{example}

\begin{figure}[htb!]
\centering
\includegraphics[width=0.32\linewidth]{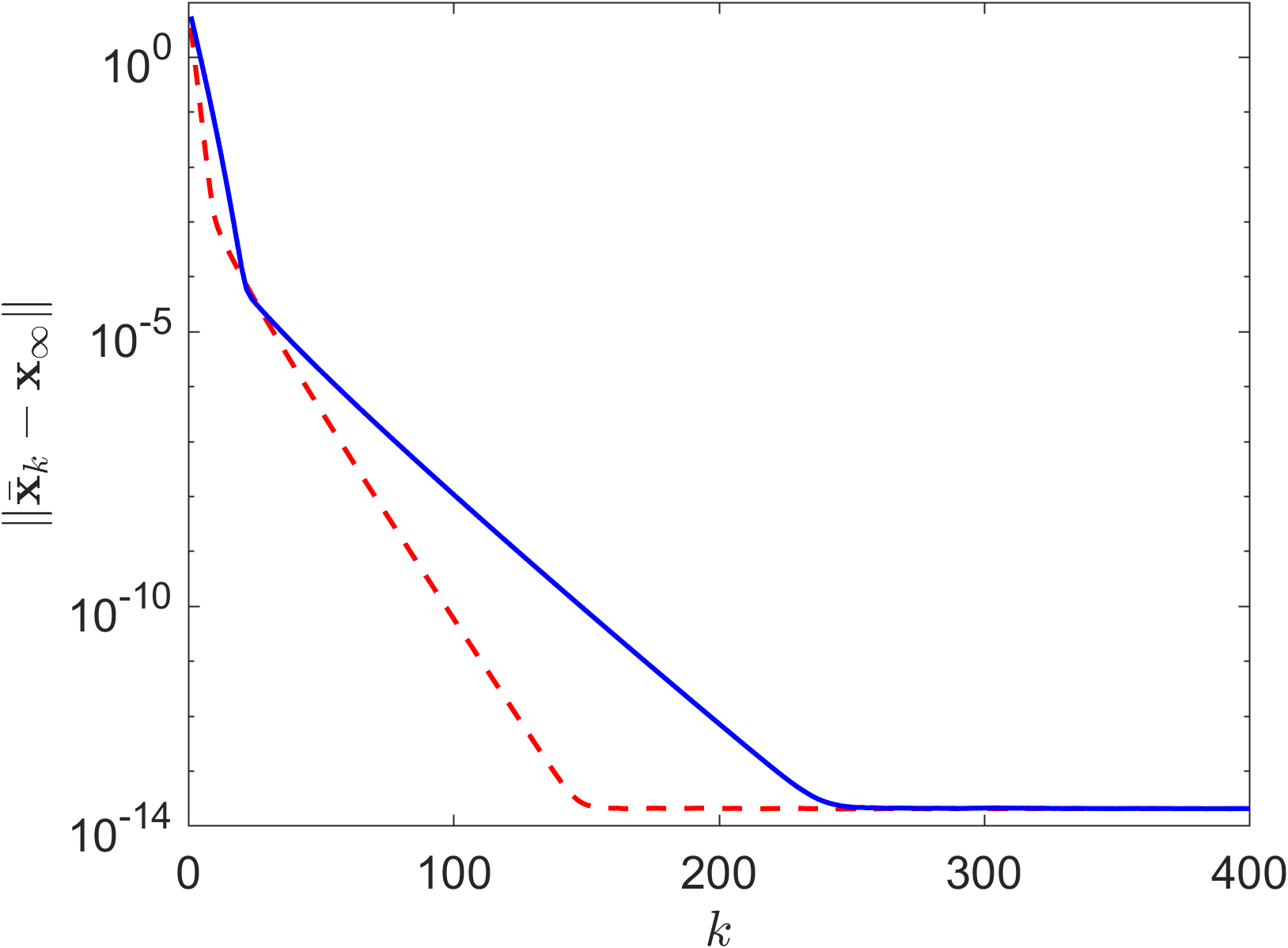}
\includegraphics[width=0.32\linewidth]{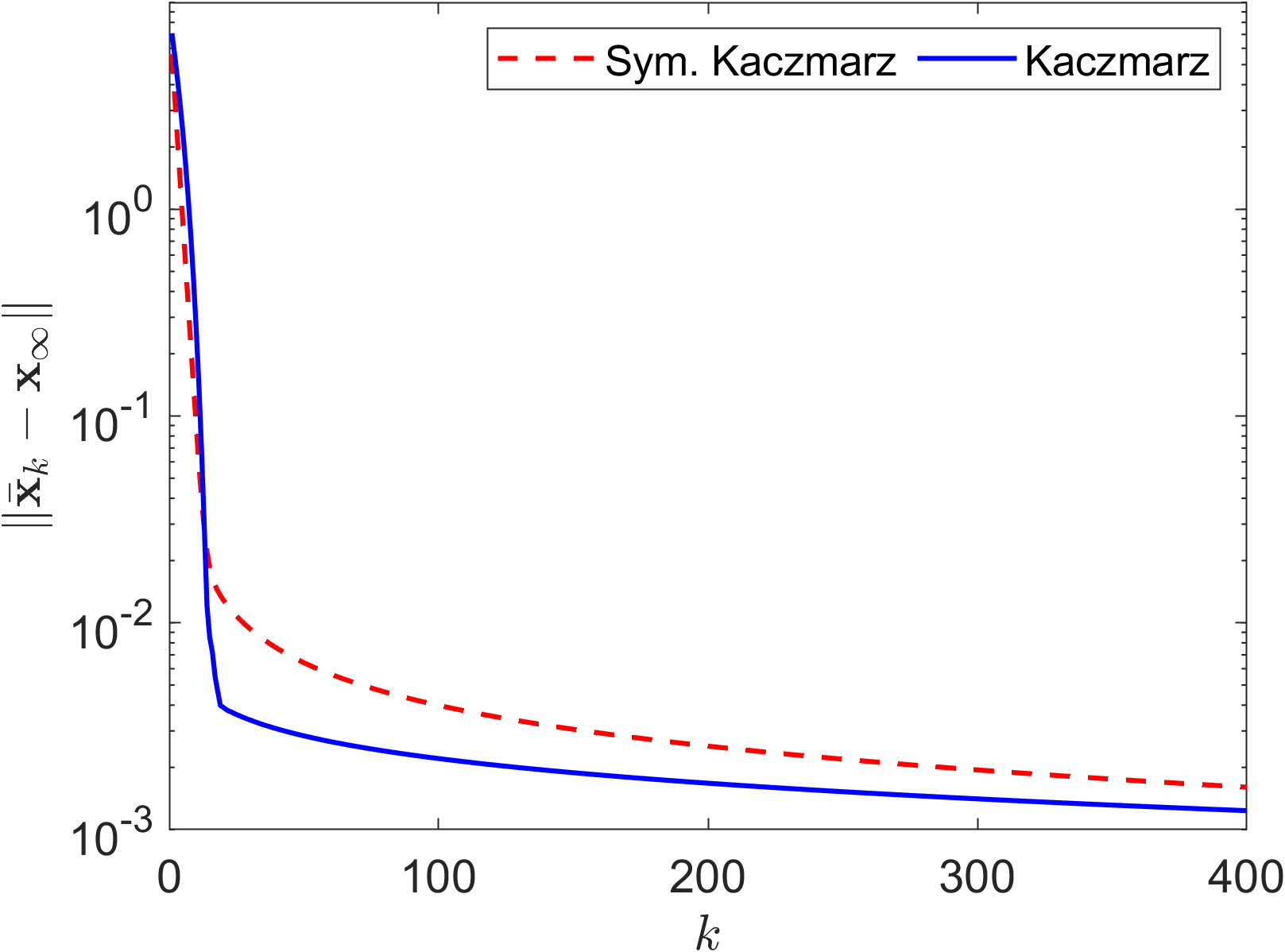}
\includegraphics[width=0.32\linewidth]{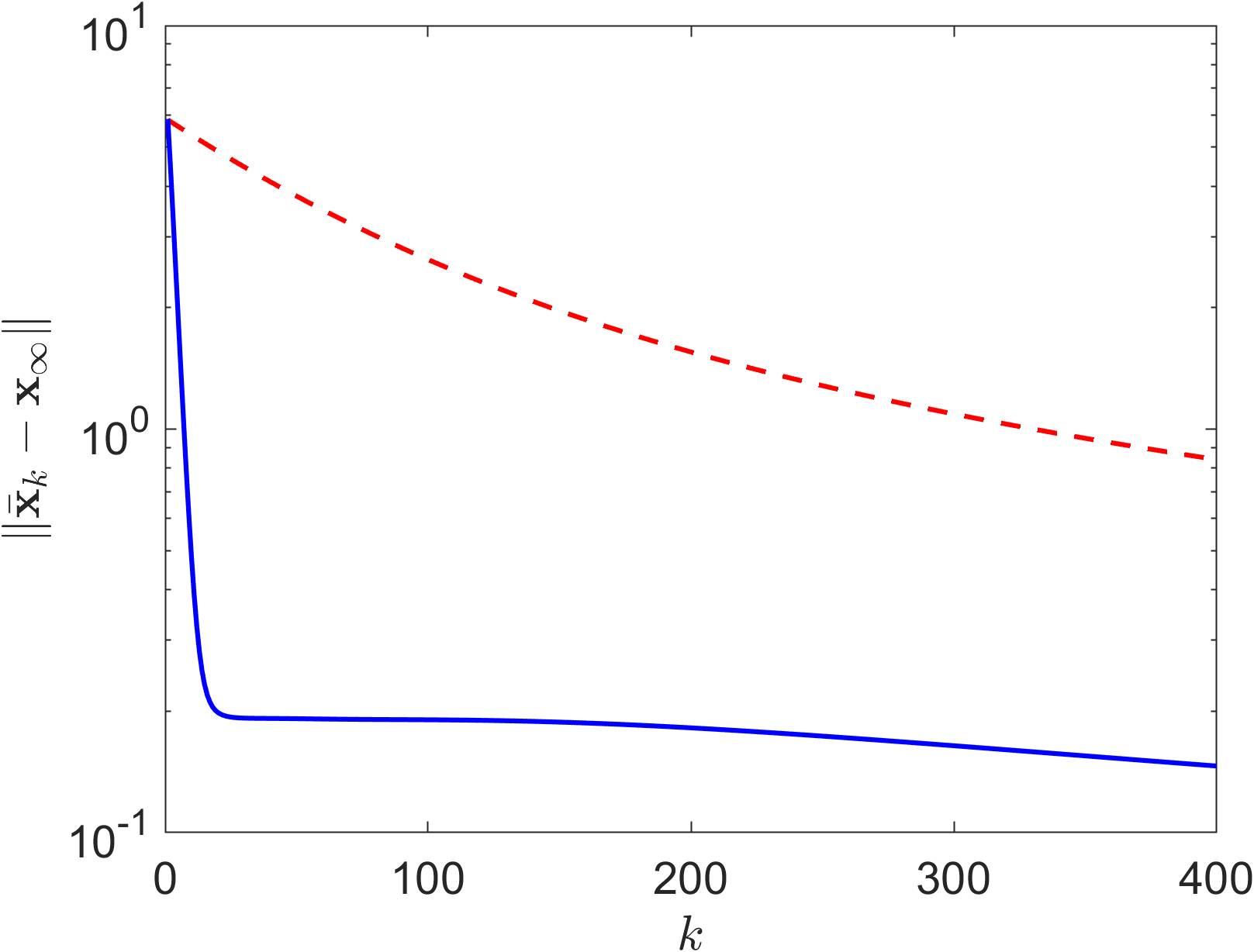}
\caption{Comparison of the Kaczmarz and symmetric Kaczmarz methods
for the {\sf gravity} test problem
with {\tt d} = 0.01 (left), 0.02 (middle), and 0.4 (right).
In all three plots, we see an initial fast convergence followed by a phase with slow asymptotic convergence.}
\label{fig:comparison}
\end{figure}

\begin{example} \rm
Figure~\ref{fig:comparison} shows the convergence of the Kaczmarz and symmetric Kaczmarz methods for noise-free data.
We take {\sf gravity} with $n=128$; we use $\texttt{d}=0.01$, $0.02$, and $0.4$ resulting in condition numbers $10.21$, $415.7$, and $1.95\cdot 10^{19}$, respectively.
We observe the initial fast convergence followed by a (much) slower asymptotic convergence.
We also see that symmetric Kaczmarz can be faster than Kaczmarz, as reflected in the spectral radii:
$\rho(G) \approx 0.92$, $\rho(G_{\rm s}) \approx 0.85$ (for $\mathtt{d}=0.01$),
$\rho(G) \approx 0.9999$, $\rho(G_{\rm s}) \approx 0.9998$ (for $\mathtt{d}=0.02$), and
$\rho(G) \approx \rho(G_{\rm s}) \approx 1$ (for $\mathtt{d}=0.4$).
\end{example}

\begin{figure}[htb!]
\centering
\includegraphics[width=0.35\linewidth]{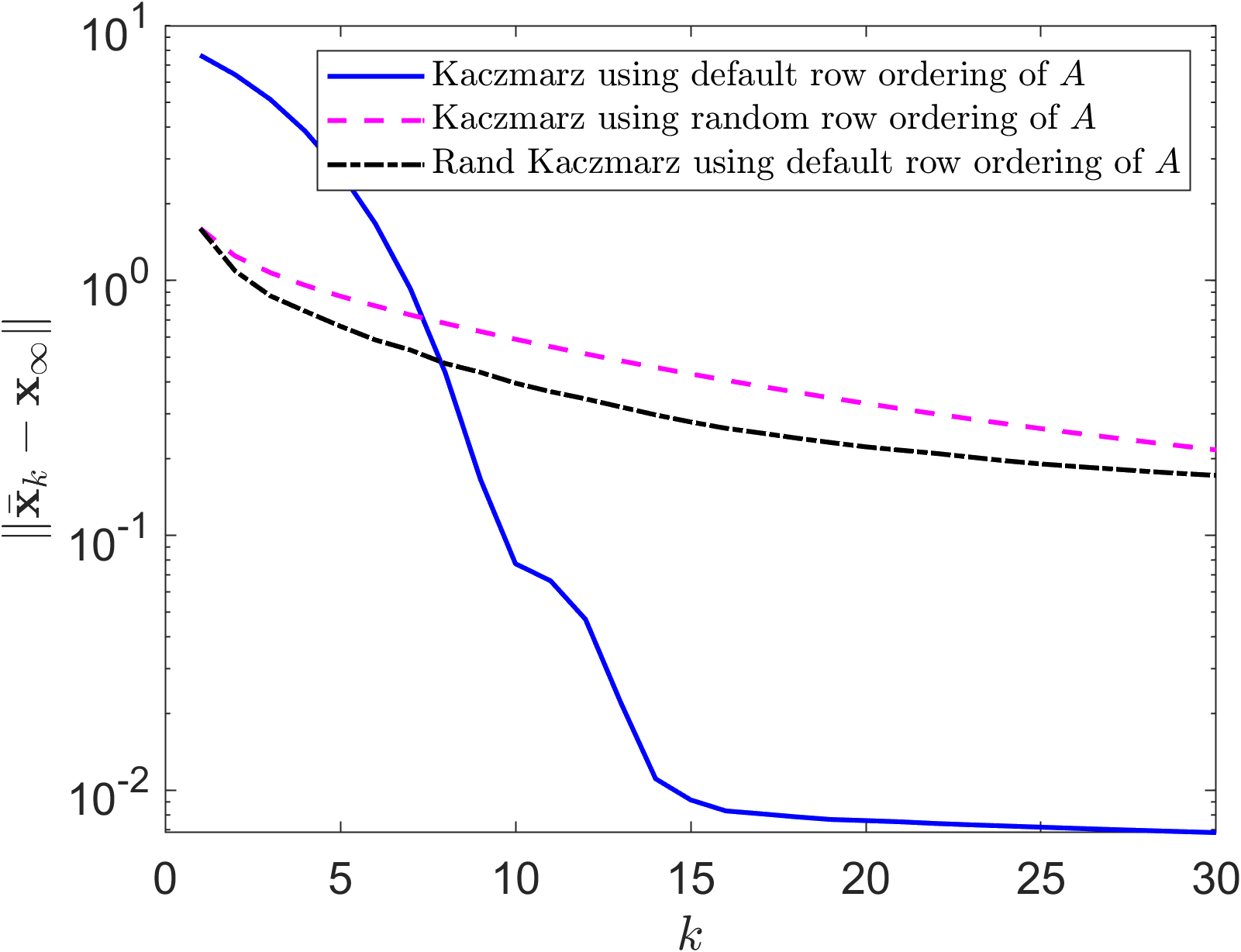}
\caption{The initial iterations of Kaczmarz's method for the {\sf gravity} problem.}
\label{fig:initial}
\end{figure}

\begin{example} \rm
We consider the initial iterations of the Kaczmarz method, using the
\textsf{gravity} test problem with ${\mathtt d} = 0.03$ and noise-free data.
Figure~\ref{fig:initial} shows the iteration errors for Kaczmarz for the default
row order of $A$ and a random row ordering; it is evident that the row ordering can
have a dramatic influence on the initial convergence.
We also show the convergence for the randomized Kaczmarz method, showing that this
method can have slow initial convergence.
The spectral radii (which are not very relevant for the initial behavior) are $\rho(G) \approx 1-10^{-8}$ for both orderings.
\end{example}

\begin{example} \rm
From other unreported experiments, we mention that if the true solution $\fixp$ is equal to the eigenvector corresponding to the smallest eigenvalue in absolute value (so $\fixp = \bw_1 = \ba_1$), then convergence indeed takes place in just one step.
On the other hand, for the other extreme case $\fixp = \bw_n$, corresponding to the spectral radius $\rho(G\vert_{\calv})$, there is virtually no reduction of the iteration error after 200 steps.
To conclude: we can influence the convergence via row orderings and an $\omega$-value, but $\fixp$ is a factor as well, outside of our control.
\end{example}

\subsection{Eigenvalues of the symmetric Kaczmarz method} \label{sec:symK}
As discussed in the introduction, symmetric Kaczmarz has been studied by several authors.
It is less known, and, to the best of our knowledge, the method is not used by practitioners.
In our experience its convergence is often considerably slower than standard Kaczmarz.
One reason to study the symmetric variant is because the analysis for symmetric (and hence normal) operators is much more straightforward than for the nonnormal case, as generally is the case for standard Kaczmarz.
In addition, the hope is that some properties that hold for the symmetric approach, also extend to the standard variant.
Finally, there are some cases where symmetric Kaczmarz is faster than the standard approach (cf.~Figure~\ref{fig:comparison}).

We now study the aspects that influence the convergence of symmetric Kaczmarz.
From the fact that both the standard and symmetric Kaczmarz methods converge (see Section~\ref{sec:stage}), we get the following consequence.

\begin{proposition} \label{prop:Gs}
Let $0 < \omega < 2$.
The spectral radii of the standard and symmetric Kaczmarz satisfy:
\[
\rho(G_{\rm s}\vert_{\calv}) = \|G\vert_{\calv}\|^2, \quad {\rm and} \quad \max \, \{ \, \rho(G\vert_{\calv}), \ \rho(G_{\rm s}\vert_{\calv}), \ \|G\vert_{\calv}\| \, \} < 1.
\]
\end{proposition}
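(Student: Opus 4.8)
The plan is to reduce both claims to the single observation that, on the invariant subspace $\calv$, the symmetric-Kaczmarz operator is literally of the form $M\trans M$, where $M$ is a matrix representation of $G\vert_{\calv}$ in an orthonormal basis. First I would fix the matrix $V \in \R^{n\times r}$ with orthonormal columns coming from the SVD of $A$, so that $V$ is an isometry from $\R^r$ onto $\calv$ with $V\trans V = I_r$. Setting $M := V\trans G V$ for the matrix of $G\vert_{\calv}$ in this basis, we have $\|G\vert_{\calv}\| = \|M\|$ and $\rho(G\vert_{\calv}) = \rho(M)$, since passing to the $V$-basis is conjugation by an isometry and preserves both the $2$-norm and the spectrum.

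Next I would check that $G\trans$ also leaves $\calv$ invariant: from \eqref{upG}, $G\trans = I - A\trans L^{-\top} A$, and $A\trans L^{-\top} A$ has range in ${\rm range}(A\trans) = \calv$ while $I$ maps $\calv$ into $\calv$; hence $G\trans\vert_{\calv}$ is a well-defined operator on $\calv$, with matrix $V\trans G\trans V = M\trans$ in the $V$-basis. Since $G_{\rm s} = G\trans G$ and both factors preserve $\calv$, we get $G_{\rm s}\vert_{\calv} = (G\trans\vert_{\calv})\,(G\vert_{\calv})$, whose matrix in the $V$-basis is $M\trans M$. This matrix is symmetric positive semidefinite, its eigenvalues are the squared singular values of $M$, and therefore $\rho(G_{\rm s}\vert_{\calv}) = \rho(M\trans M) = \|M\|^2 = \|G\vert_{\calv}\|^2$, which is the first claimed identity.

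For the remaining strict inequality I would invoke the convergence of the symmetric Kaczmarz method for $0 < \omega < 2$ and $\bx_0 = \zero$ established in Section~\ref{sec:stage}, which is equivalent to $\rho(G_{\rm s}\vert_{\calv}) < 1$. Combining this with the identity just proved yields $\|G\vert_{\calv}\|^2 = \rho(G_{\rm s}\vert_{\calv}) < 1$, hence $\|G\vert_{\calv}\| < 1$, and finally $\rho(G\vert_{\calv}) \le \|G\vert_{\calv}\| < 1$ (the last bound also being available directly from the convergence of the standard method). Thus the maximum of the three quantities is strictly less than $1$.

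The only genuinely delicate point is the bookkeeping around the restrictions to $\calv$: one must be sure that ``transpose of the restriction'' coincides with ``restriction of the transpose'' here. Working in the orthonormal $V$-basis makes this the trivial matrix identity $(V\trans G V)\trans = V\trans G\trans V$, provided one has first verified that $G\trans$ maps $\calv$ into $\calv$. Everything else is standard: the relation between the singular values of $M$ and the eigenvalues of $M\trans M$, and the inequality $\rho(\cdot)\le\|\cdot\|$ for the $2$-norm.
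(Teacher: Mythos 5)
Your proof is correct and follows essentially the same route as the paper: the paper's own (very terse) argument likewise deduces everything from the convergence of both the standard and symmetric methods together with the identity $\rho(G_{\rm s}\vert_{\calv}) = \|G\vert_{\calv}\|^2$. The only difference is that you carefully verify that identity in the orthonormal $V$-basis (checking that $G\trans$ preserves $\calv$ and that restriction commutes with transposition, so the restricted operator is literally $M\trans M$), a step the paper takes for granted.
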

\vspace{-4mm}
\begin{proof}
As both the standard and symmetric Kaczmarz method converge, the two spectral radii are less than one.
Since $\rho(G_{\rm s}\vert_{\calv}) = \|G\vert_{\calv}\|^2$, this should also hold for $\|G\vert_{\calv}\|$.
\end{proof}

\begin{example} \rm
We give an example of the smallest possible dimension $m=n=2$, which gives an intuition of the behavior of $G\vert_{\calv}$.
Consider the $2 \times 2$ matrix
\[
G = \left[
\begin{array}{cc} 0.99 & \alpha \\ 0 & 0.98 \end{array}\right]
\]
for small $\alpha \ge 0$; which is intended to display the typical behavior of $G$ in miniature format.
Obviously, this matrix has $\rho(G) = 0.99$.
It can be checked numerically that $\|G\| < 1$ if and only if $\alpha < 0.0281$.
Since $\rho(G\trans G) = \|G\|^2$, this is also exactly the condition such that $\rho(G_{\rm s}) < 1$.
For this matrix, $\alpha$ is one possible measure of nonnormality (cf.~\cite{EPa87}).
Informally, this example illustrates that in Kaczmarz method, since the properties in Proposition~\ref{prop:Gs} hold, $G$ can only be modestly nonnormal.
\end{example}

The quantity $\rho(G_{\rm s}\vert_{\calv})$ from Proposition~\ref{prop:Gs} is related to the asymptotic convergence.
As in Section~\ref{sec:zero}, similar to the $G$ in the standard method, the symmetric Kaczmarz operator $G_{\rm s} = G\trans G$ also has $\ba_1$ as a zero eigenvector for $\omega = 1$ (as this is also a singular vector corresponding to a zero singular vector of $G$).
Therefore, just as for the standard approach, we also expect fast initial convergence in the symmetric method when $\fixp$ has a considerable component in the (near-)zero eigenvector(s).

\subsection[]{Upper bounds for $\rho(G\vert_{\calv})$}
As discussed in the previous subsection, $\|G\vert_{\calv}\| < 1$ is an upper bound for $\rho(G\vert_{\calv})$.
We will now present two new alternative upper bound for this spectral radius, for which we first need some preparations.
Consider the symmetric part of $L^{-1}$, that is, $\frac12 \, (L^{-1}+L^{-\top})$.
All eigenvalues of this symmetric part are real; denote the smallest eigenvalue by $\nu(L^{-1})$.
It is the smallest real part of the field of values $\calf(L^{-1})$ (see, e.g., \cite{TEm20}), where
\[
\calf(L^{-1}) = \{ \, \bz^* L^{-1} \, \bz  \, : \, \bz \in \C^m, \ \|\bz\| = 1 \, \}\, ;
\]
that is, $\nu(L^{-1}) = \min \, \{ \, {\rm Re}(\zeta) \, : \, \zeta \in \calf(L^{-1}) \, \}$.
Since
\[
L+L\trans = AA\trans + (2 \, \omega^{-1} - 1) \, D,
\]
we conclude that $L+L\trans$ is symmetric positive definite (SPD) for $0 < \omega < 2$.
Then the similarity transform $L^{-1} \, (L+L\trans) \, L^{-\top} = L^{-1} + L^{-\top}$ is also SPD.
This means that $\nu(L^{-1}) > 0$.
(Since the symmetric part is positive definite, $L^{-1}$ is sometimes also called positive definite although it is not symmetric.)

For most problems, the eigenvalue corresponding to the spectral radius $\rho(G\vert_{\calv})$ is real.
This is not always the case; as an example, we mention that {\sf gravity} of dimension $n=128$ with parameters ${\mathtt d} = 0.01$ and $\omega = 1.4$ gives a complex conjugate pair with maximum absolute value.
However, in most practical cases, $\rho(G\vert_{\calv})$ corresponds to a real and positive eigenvalue close to 1 (such as for ${\mathtt d} > 0.012$ in this example).
Hence, the assumption in the following result is common.

\begin{proposition}  \label{prop:twobounds}
Let $0 < \omega < 2$.
Suppose $\rho(G\vert_{\calv})$ is associated with a simple and real eigenvalue $\lambda > 0$.
Then
\[
\rho(G\vert_{\calv})
\ \le \ 1 - \frac{\sigma_{\min}^2(A\vert_{\calv})}{\|L\|}
\ \le \ 1 - \nu(L^{-1}) \cdot \sigma_{\min}^2(A\vert_{\calv}) \, .
\]
\end{proposition}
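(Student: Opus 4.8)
The plan is to reduce everything to the eigenvalue $\mu := 1-\lambda$ of $A\trans L^{-1}A\vert_\calv = (I-G)\vert_\calv$: since $\lambda = \rho(G\vert_\calv)$ is simple, real and positive, the iterates converge, so $\mu\in(0,1)$ and $\rho(G\vert_\calv)=1-\mu$. Thus each claimed inequality is equivalent to a lower bound on $\mu$, namely $\mu\ge\nu(L^{-1})\,\sigma_{\min}^2(A\vert_\calv)$ and the sharper $\mu\ge\sigma_{\min}^2(A\vert_\calv)/\|L\|$. I would prove the sharper one directly, prove $\nu(L^{-1})\le\|L\|^{-1}$ separately, and obtain the weaker one (hence the stated chain) by combining the two.

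For the right-hand (weaker) bound the quick route is the symmetric part of $L^{-1}$. Since $A\trans L^{-1}A$ is real and $\mu$ is real, choose a real unit eigenvector $\bw\in\calv$ with $A\trans L^{-1}A\,\bw=\mu\,\bw$. Then, using that a real quadratic form of $L^{-1}$ coincides with that of $\tfrac12(L^{-1}+L^{-\top})$,
\[
\mu=\bw\trans A\trans L^{-1}A\,\bw=(A\bw)\trans\,\tfrac12(L^{-1}+L^{-\top})\,(A\bw)\ \ge\ \nu(L^{-1})\,\|A\bw\|^2.
\]
Because $\bw\in\calv={\rm null}(A)^\perp$ with $\|\bw\|=1$ we have $\|A\bw\|^2\ge\sigma_{\min}^2(A\vert_\calv)$, giving $\mu\ge\nu(L^{-1})\,\sigma_{\min}^2(A\vert_\calv)$.

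For the left-hand (sharper) bound I would cross over to the other side via Lemma~\ref{lem:eigAB}: $\mu$ is a nonzero eigenvalue of the pencil $(AA\trans,L)$, so there is a real $\bz$ with $AA\trans\bz=\mu\,L\bz$. Left-multiplying by $\bz\trans$ gives the identity $\|A\trans\bz\|^2=\mu\,\bz\trans L\bz$. Now two elementary estimates close the argument. First, $L+L\trans=AA\trans+(2\omega^{-1}-1)D$ is SPD for $0<\omega<2$, so $\bz\trans L\bz=\tfrac12\bz\trans(L+L\trans)\bz\le\lambda_{\max}\!\big(\tfrac12(L+L\trans)\big)\|\bz\|^2\le\|L\|\,\|\bz\|^2$, the last inequality by $\|L+L\trans\|\le2\|L\|$. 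Second, $\|A\trans\bz\|^2\ge\sigma_{\min}^2(A\vert_\calv)\,\|\bz\|^2$. Combining, $\sigma_{\min}^2(A\vert_\calv)\,\|\bz\|^2\le\mu\,\|L\|\,\|\bz\|^2$, i.e. $\mu\ge\sigma_{\min}^2(A\vert_\calv)/\|L\|$. To order the two bounds it remains to show $\nu(L^{-1})\le\|L\|^{-1}$: take a unit right singular vector $\bp$ of $L$ with $\|L\bp\|=\|L\|$ and test the field of values at $\bz_0=L\bp/\|L\|$, so that $\nu(L^{-1})\le{\rm Re}(\bz_0^*L^{-1}\bz_0)=\|L\|^{-2}\,\bp\trans L\bp=\|L\|^{-2}\cdot\tfrac12\bp\trans(L+L\trans)\bp\le\|L\|^{-2}\cdot\|L\|=\|L\|^{-1}$; then $1-\sigma_{\min}^2(A\vert_\calv)/\|L\|\le1-\nu(L^{-1})\,\sigma_{\min}^2(A\vert_\calv)$.

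The step I expect to be the main obstacle is $\|A\trans\bz\|^2\ge\sigma_{\min}^2(A\vert_\calv)\,\|\bz\|^2$ in the sharper bound. It is immediate when $A$ has full row rank, because then ${\rm range}(A)=\R^m$ and $A\trans$ is injective with smallest singular value $\sigma_{\min}(A\vert_\calv)$; but when $\calu={\rm range}(A)\subsetneq\R^m$ the generalized eigenvector $\bz$ of $(AA\trans,L)$ need not lie in $\calu$, and $\|A\trans\bz\|$ only sees the $\calu$-component of $\bz$. Making this rigorous in general seems to require either picking the eigenvector on the range side (e.g.\ of $AA\trans L^{-1}$ on its invariant subspace $\calu$) and carrying along the resulting angle factor, or using that the Kaczmarz iteration effectively acts on $\calu$; that is the point I would work out carefully.
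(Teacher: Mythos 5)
Your proof of the second (weaker) bound and of the ordering $\nu(L^{-1})\le\|L\|^{-1}$ is correct and essentially identical to the paper's: the paper also takes the (real, unit) eigenvector in $\calv$ associated with $\lambda$, writes $1-\lambda=(A\bx)\trans L^{-1}(A\bx)$, bounds this below by $\nu(L^{-1})\,\|A\bx\|^2\ge\nu(L^{-1})\,\sigma_{\min}^2(A\vert_{\calv})$ (using $\nu(L^{-1})>0$, established just before the proposition), and proves $\nu(L^{-1})\le\|L\|^{-1}$ with exactly your test vector $L\bp/\|L\bp\|$.

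The genuine gap is the one you flagged yourself, and it is real: for the sharper bound you leave the $\calv$-side and use a pencil eigenvector $\bz$ with $AA\trans\bz=\mu L\bz$. But by the $BC$ versus $CB$ relation behind Lemma~\ref{lem:eigAB}, that eigenvector is $\bz=L^{-1}A\bw$, which lies in $L^{-1}\calu$, not in $\calu$; hence $\|A\trans\bz\|\ge\sigma_{\min}(A\vert_{\calv})\,\|\bz\|$ can fail whenever ${\rm rank}(A)<m$, and your identity only yields $\mu\ge\sigma_{\min}^2(A\vert_{\calv})\,\|P_{\calu}\bz\|^2/(\|L\|\,\|\bz\|^2)$ with an uncontrolled angle factor. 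So, as written, you have established the first inequality only when $A$ has full row rank, whereas the proposition is stated for general $A$. The paper's proof never crosses over: it stays with the same eigenvector $\bx\in\calv$ used for the second bound and lower-bounds $(A\bx)\trans L^{-1}(A\bx)$ by $\sigma_{\min}(L^{-1})\,\sigma_{\min}^2(A\vert_{\calv})\,\|\bx\|^2=\|L\|^{-1}\sigma_{\min}^2(A\vert_{\calv})$, so the $\calu$ versus $L^{-1}\calu$ mismatch never appears. Note, however, that this step bounds a quadratic form of the nonsymmetric matrix $L^{-1}$ from below by its smallest singular value rather than by $\nu(L^{-1})$ --- precisely the kind of step your symmetric-part estimate is designed to avoid, and the reason the weaker bound carries the constant $\nu(L^{-1})$. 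So if you repair your argument by following the paper's route, you should also make explicit why that singular-value lower bound is admissible for the particular vector $A\bx$; each of the two routes hinges on one delicate point, yours being the subspace mismatch, the paper's being this quadratic-form estimate.
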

\begin{proof}
Let $\bx \in \calv$ be the eigenvector associated  with $\lambda$, such that $\lambda = \bx^* G\, \bx$.
Because of the assumptions, $\rho(G\vert_{\calv}) = \lambda = 1 - (\bx^* A\trans) \, L^{-1} \, (A \bx)$.
Then the first bound follows from
\[
(\bx^* A\trans) \, L^{-1} \, (A \bx) \ge
\sigma_{\min}(L^{-1}) \cdot \sigma_{\min}^2(A\vert_{\calv}) \cdot \|\bx\|^2
= \|L\|^{-1} \cdot \sigma_{\min}^2(A\vert_{\calv}),
\]
We also have
\[
(\bx^* A\trans) \, L^{-1} \, (A \bx) \ge \nu(L^{-1}) \cdot \|A\bx\|^2
\ge \nu(L^{-1}) \cdot \sigma_{\min}^2(A\vert_{\calv}).
\]
which yields the second bound.
Finally, let $\by$ be the largest singular vector of $L$.
By taking $\bz = L\by \, / \, \|L\by\|$ in $\nu(L^{-1}) \le \bz^* \, L^{-1} \bz$, it may be checked that $\nu(L^{-1}) \le \|L\|^{-1}$, so that the first bound is as least as tight.
\end{proof}

For ill-posed problems, $\sigma_{\min}^2(A\vert_{\calv})$ may be (very) close to zero, so this upper bound may be (very) close to 1.
However, this is realistic, since $\rho(G\vert_{\calv})$ is often (very) close to 1.

\begin{example} \rm
For {\sf gravity} with $n=128$, ${\mathtt d} = 0.02$ and $\omega = 1$, the matrix $A$ has no nullspace, so no restriction is necessary.
We have $\rho(G) \approx 1-1 \cdot 10^{-4}$.
The two upper bounds for $\rho(G)$ from Proposition~\ref{prop:twobounds} are both approximately $1-1 \cdot 10^{-5}$.
The norm upper bound is the best of the three bounds: $\rho(G) \le \|G\| \approx 1-9.9 \cdot 10^{-5}$.
\end{example}

\subsection[ ]{Spectral properties for small values of $\omega$}  \label{sec:small}
From our extensive practical experience, the authors generally recommend an $\omega$-value between 0.4 and 1.8; as mentioned, $\omega=1$ is a good default choice for many problems.
To add to the overall picture of $G$, in this section we investigate some spectral properties of $G$ for small $\omega$, a value which also may have its merits.
An early study \cite{CEG83} on this situation has already pointed out that the limit solved a weighted least squares problem.
For $\omega \to 0$, it is easy to check that
\[
A\trans L_{\omega} \, A = A\trans (\wh L + \omega^{-1} D)^{-1} \, A = \omega \ A\trans D^{-1} \, A - \omega^2 \, A\trans D^{-1} \, \wh L \, D^{-1} \, A  +\calo(\omega^3).
\]
Disregarding second- and higher-order terms, the operator $I-\omega \, A\trans D^{-1} A$ is the first-order part of $G$; it is symmetric positive definite.
For small $\omega$ we have the first-order upper bound for its largest eigenvalue, which determines the spectral radius:
\[
\rho((I-\omega \, A\trans D^{-1} A)\vert_{\calv}) \lesssim 1 - \omega \cdot \sigma_{\min}^2(A\vert_{\calv}) \cdot (\max_i \|\ba_i\|^2)^{-1}.
\]
Just as in the previous subsection, this upper bound may be very close to 1, but this may also hold for the spectral radius itself.

When $\|\omega \, A\trans L^{-1} A\| < 1$, all eigenvalues of the first-order part of $G$  have positive real part.
Since $\|A\trans L^{-1} A\vert_{\calv}\| \ge \sigma_{\min}(A\trans L^{-1} A\vert_{\calv})$,
this means that we expect that this is the case for all $\omega \lesssim \|(A\trans L^{-1} A\vert_{\calv})^{-1}\|$.

Moreover, we can use the fact that eigenvalues are continuous functions of the matrix elements.
Under the very mild assumption that the eigenvalues of $A\trans D^{-1} A$ are simple (which is generically the case), there exists an $\omega_0 > 0$, such that for all $0 < \omega < \omega_0$, all eigenvalues of $G$ are real.
This is illustrated in Figure~\ref{fig:zero}.
where we see that $\omega_0 \approx 0.08$ for the {\sf gravity} problem while $\omega_0 \approx 0.004$ for the tomography problem.

\begin{figure}[htb!]
\centering
\includegraphics[width=0.3\linewidth]{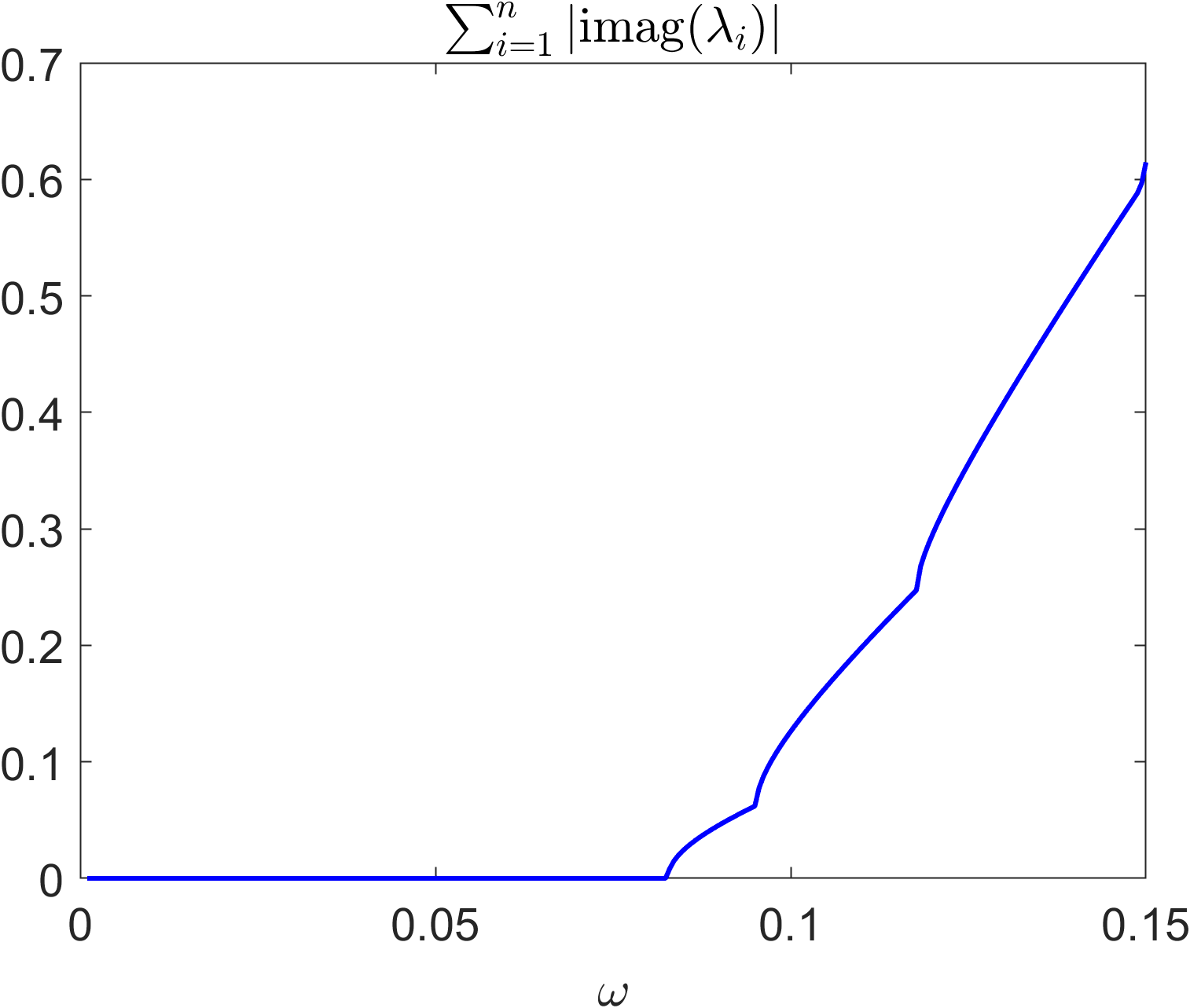} \quad
\includegraphics[width=0.3\linewidth]{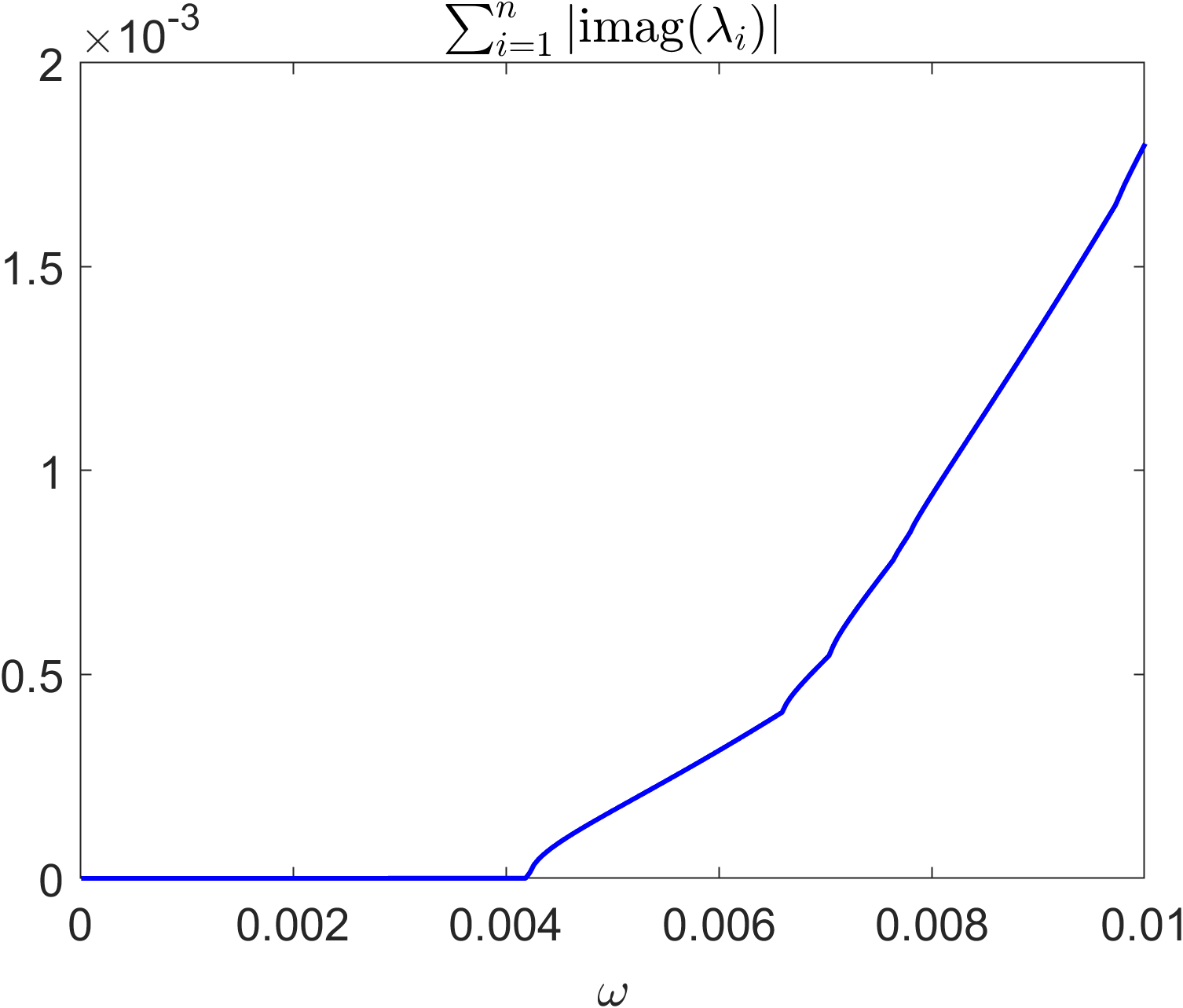}
\caption{Two examples illustrating that all eigenvalues are real for all $\omega$ smaller than a certain $\omega_0$.
Left:\ \textsf{gravity} problem with ${\mathtt d} = 0.06$ and $n=128$.
Right:\ tomography problem with $m=n=1024$.
All eigenvalues are real for small $\omega$.}
\label{fig:zero}
\end{figure}

To summarize the aspects discussed above, for small $\omega$ the operator $G$ behaves as a symmetric map (cf.~also \cite{CEG83}), with only real eigenvalues that approach 1 in the limit.
The convergence of symmetric Kaczmarz is often slow, and spends two sweeps per iteration.
Standard Kaczmarz with a small value of $\omega$ (e.g., $0.1$--$0.2$) results in an almost symmetric operator without the double sweeps of symmetric Kaczmarz, with hopefully faster convergence, while it also nearly produces a least squares solution (cf.~\cite{CEG83}).
Having only eigenvalues of $G$ with positive real part may also result in fewer oscillations of the iteration error.
This suggests that if a careful approach is preferred by practitioners, {\em standard Kaczmarz with a modest $\omega$ may combine some of the advantages of the standard and symmetric approaches}.
However, this small $\omega$ may generally not be the parameter value that leads to the fastest convergence.

\medskip\noindent
In this Section~\ref{sec:eigen}, we have reviewed several spectral aspects to obtain a better understanding of the convergence of Kaczmarz's method, especially for the initial iterations.
For the asymptotic situation, we did not include the role of the noise in this section; this is the topic of Section~\ref{sec:stat}.

\section{Statistics of the noise propagation} \label{sec:stat}

To obtain more insight into the convergence and semi-convergence of Kaczmarz's method
we now study how the noise in the right-hand side propagates to the iterates.
Classical semi-convergence analysis (see \cite{Elfving25} for an overview)
provides a (sometimes pessimistic) upper bound for the norm of the noise
component in $\bx_k$,
while a lower bound for this norm is lacking.
Our statistical analysis, following the approach in \cite{HansenLAA},
uses a different viewpoint to provide insight that semi-convergence is very likely to happen.

In this paper, for convenience, we consider white Gaussian noise in the right-hand side which we write as
  \begin{equation} \label{eq:setup}
    \bb = \bar \bb + \be \ , \qquad \bar \bb = A\, \bar \bx \ , \qquad
    \be \sim \caln(0,\, \sigma^2 I) \ ,
  \end{equation}
where $\bar \bx$ is the exact solution (also called the ground truth),
$\bar \bb$ is the noise-free data, $\be$ is the noise, and $\sigma$ is its element-wise
standard deviation.
Again, we assume that $\bar \bx$ has no component in the nullspace $\calv^{\perp}$ of $A$
since we are not able to reconstruct such a component (this is a general difficulty
for inverse problems).
This means that if there is no noise then the fixed point $\fixp$ is identical to
the exact solution $\bar{\bx}$.

With $A_k^{\#}$ given by \eqref{Ak}, we can write the $k$th iterate as
  \[
    \bx_k = A_k^{\#} \, \bb = \bar \bx_k + A_k^{\#} \, \be \qquad \hbox{where} \qquad
    \bar \bx_k = A_k^{\#} \,  \bar \bb \ ;
  \]
we refer to $\bar \bx_k$ as the noise-free iterates.
As usual (see, e.g., \cite{ENH10}), we can then split the reconstruction error into two components:
  \begin{equation} \label{eq:three}
    \bx_k - \bar \bx = \underbrace{\bx_k - \bar \bx_k}_{\hbox{noise error}}
    + \underbrace{\bar \bx_k - \bar \bx}_{\hbox{it.~error}} .
  \end{equation}
We refer to $\bar \bx_k - \bar \bx$ as the \emph{iteration error}; in Section~\ref{sec:eigen}
we have derived results for its initial behavior.
The other component $\bx_k - \bar \bx_k = A_k^{\#} \, \be$, which we refer to as the \textit{noise error},
describes the propagated noise from the data.
A general observation \cite{EHN14} is that the noise error increases with the number of iterations and
dominates in the later iterations while it is small in the initial iterations.
Consequently, the reconstruction error in the initial iterations is practically independent of the noise.
Our aim here is to substantiate this observation by studying the statistics of $\| A_k^{\#} \be \|$.

A convenient way to do this is in the eigenvector basis.
\mh{In terms of the matrices $\Afix$ and $A_k^{\#}$ defined in \eqref{eq:fixpoint} and \eqref{Ak}, respectively,
we can write the noise error as
  \[
    \bx_k - \bar \bx_k = A_k^{\#} \be = (I-G^k)\, \Afix \be =
    (I-G^k)\, (A\trans L^{-1} \, A\vert_{\calv})^{-1} \, A\trans L^{-1} \, \be \ ,
  \]
This is identical to \cite[Eqs.~(16) and (18)]{Elfving25}; we will now add an analysis using the spectral decompositions in \eqref{eigG} and \eqref{eq:ED}. We can write
  \[
    A_k^{\#} \be = W\, (I-\Lambda^k)\, W^{-1} \Afix \be \ .
  \]
Moreover, if we introduce
  \[
    \bxi^k = W^{-1} A_k^{\#} \, \be \ , \qquad \bxi = W^{-1} A^{\#} \, \be
  \]
then we have
  \[
    \bxi^k = (I-G^k)\,\bxi \ .
  \]
Then it follows that}
  \begin{equation} \label{eq:normxik}
    \bigl\| \bxi^k \bigr\|^2 = \bigl\| (I-G^k)\, \bxi \bigr\|^2 =
    \sum_{i=1}^n \ \bigl| 1 - \lambda_i^k \bigr|^2 \ |\xi_i|^2 \ ,
  \end{equation}
in which $\xi_i$ are the elements of the vector $\bxi$.
Notice that $k$, the number of iterations, only enters via the factor $I-G^k$.
Neither $\| A_k^{\#} \be \|$ nor $\| \bxi^k \|$ increases monotonically with $k$
in general (as may be easily verified by numerical experiments).
For the latter norm, this follows from the fact that for complex $\lambda_i$ with $|\lambda_i|<1$, we still may have $|1-\lambda_i^k|>1$.

\begin{figure}[htb!]
\centering
\includegraphics[width=0.4\linewidth]{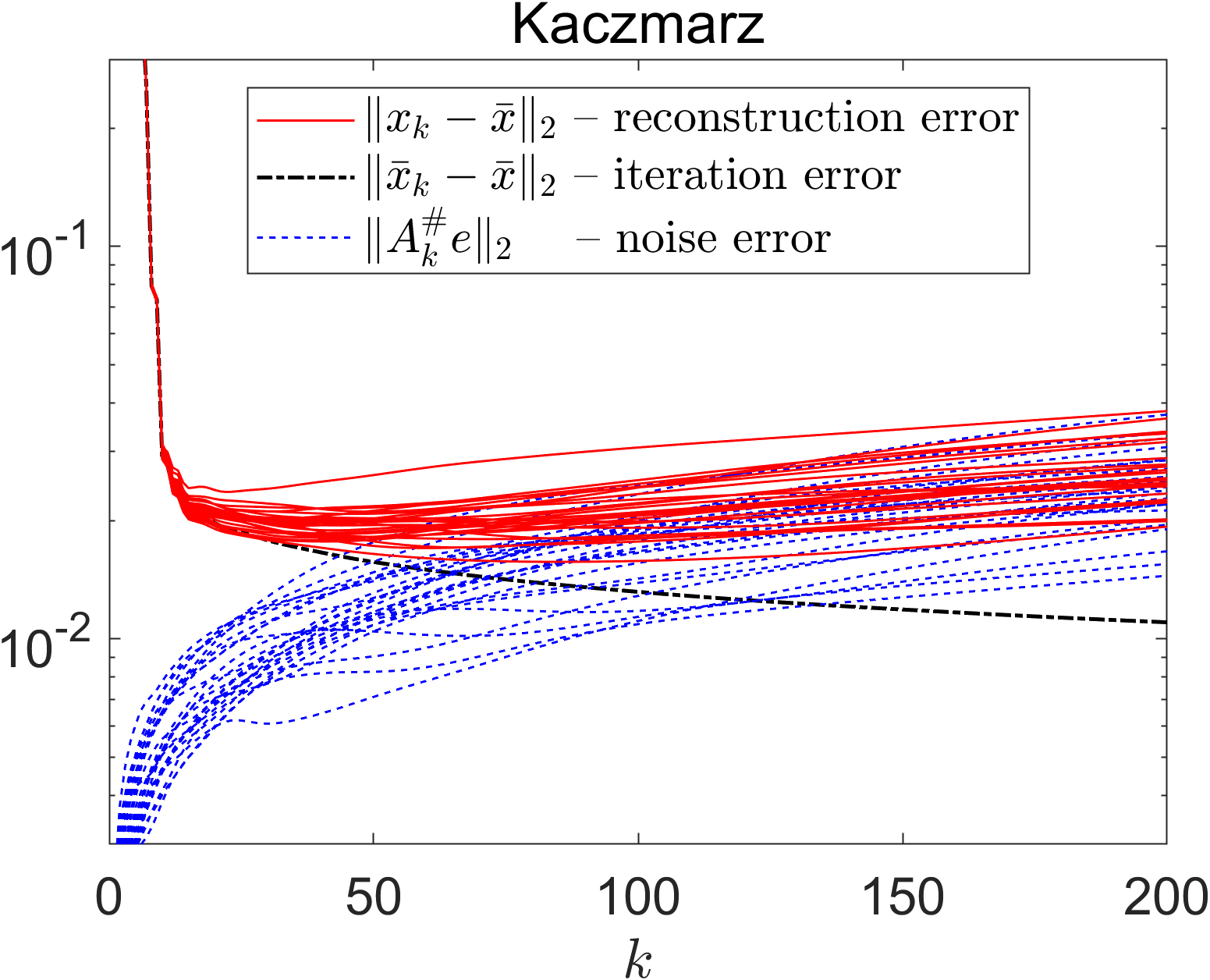}
\includegraphics[width=0.4\linewidth]{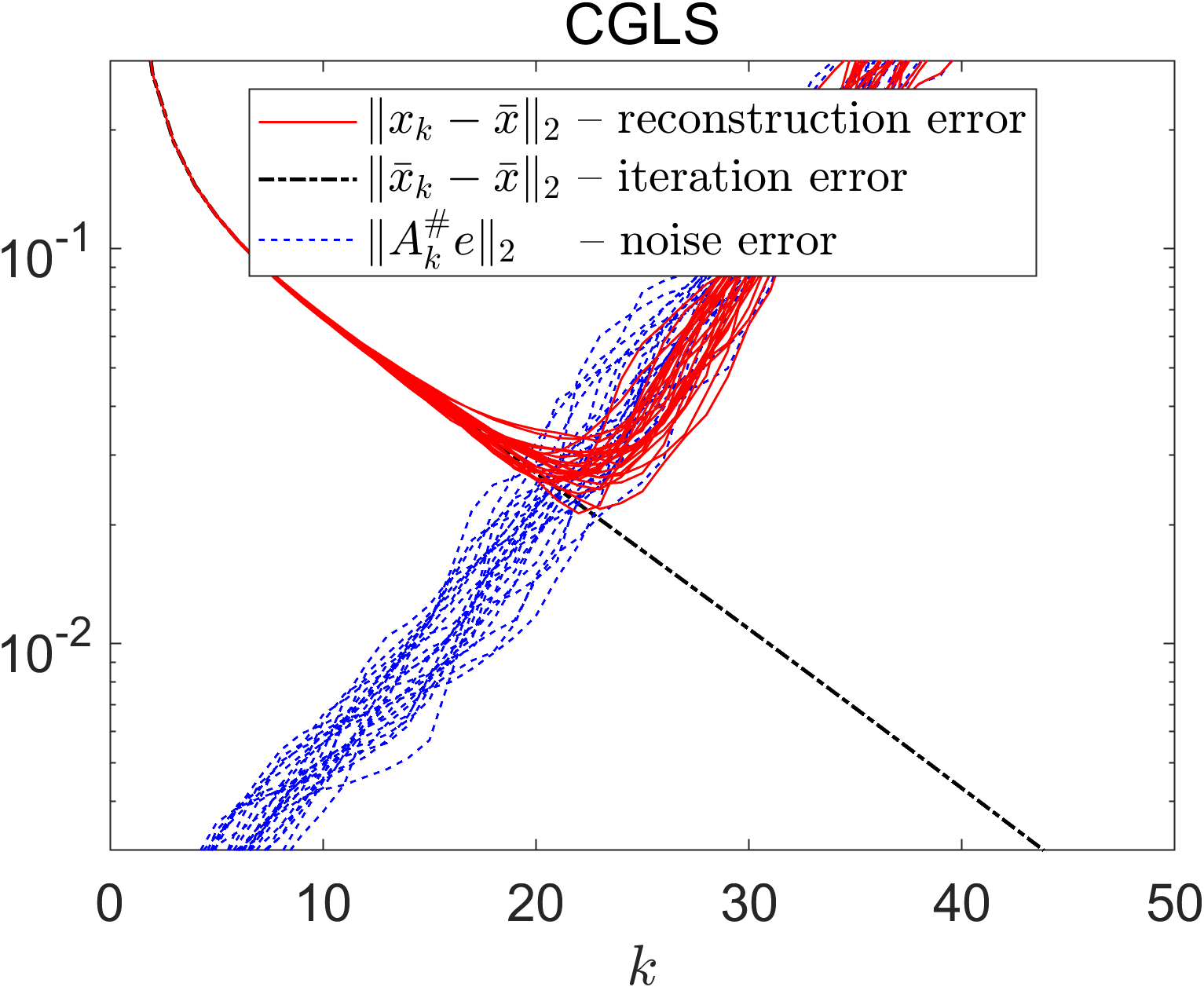}
\caption{Illustration of the three errors in \eqref{eq:three} for the {\sf gravity}
test problem, and with 25 realization of the noise.
The results for CGLS (whose behavior is analyzed in \cite{HansenLAA}) are included to demonstrate
that the underlying behavior of semi-convergence is the same for both methods.}
\label{fig:gravity}
\end{figure}

\begin{example} \rm
To illustrate the mechanism underlying semi-convergence, Figure~\ref{fig:gravity} shows plots of the norms
of the three errors defined in \eqref{eq:three}: the reconstruction error $\bx_k-\bar \bx$,
the iteration error $\bar \bx_k-\bx_k$, and the iteration error $\bx_k-\bar \bx_k$.
We use the test problem {\sf gravity} with ${\mathtt d} = 0.06$ and $n=128$,
and we show results for both CGLS (whose behavior is analyzed in \cite{HansenLAA}) and Kaczmarz,
for 25 realizations of the noise.
A single curve for the iteration error is independent of the noise.
All 25 reconstruction errors behave qualitatively in the same way:\ at first this error
is dominated by the decreasing iteration error (which initially decreases very fast for Kaczmarz);
later it is dominated by the increasing noise error.
Note that in a few cases, the norm of the noise error does not increase monotonically.
\end{example}

\begin{figure}[htb!]
\centering
\includegraphics[width=0.5\linewidth]{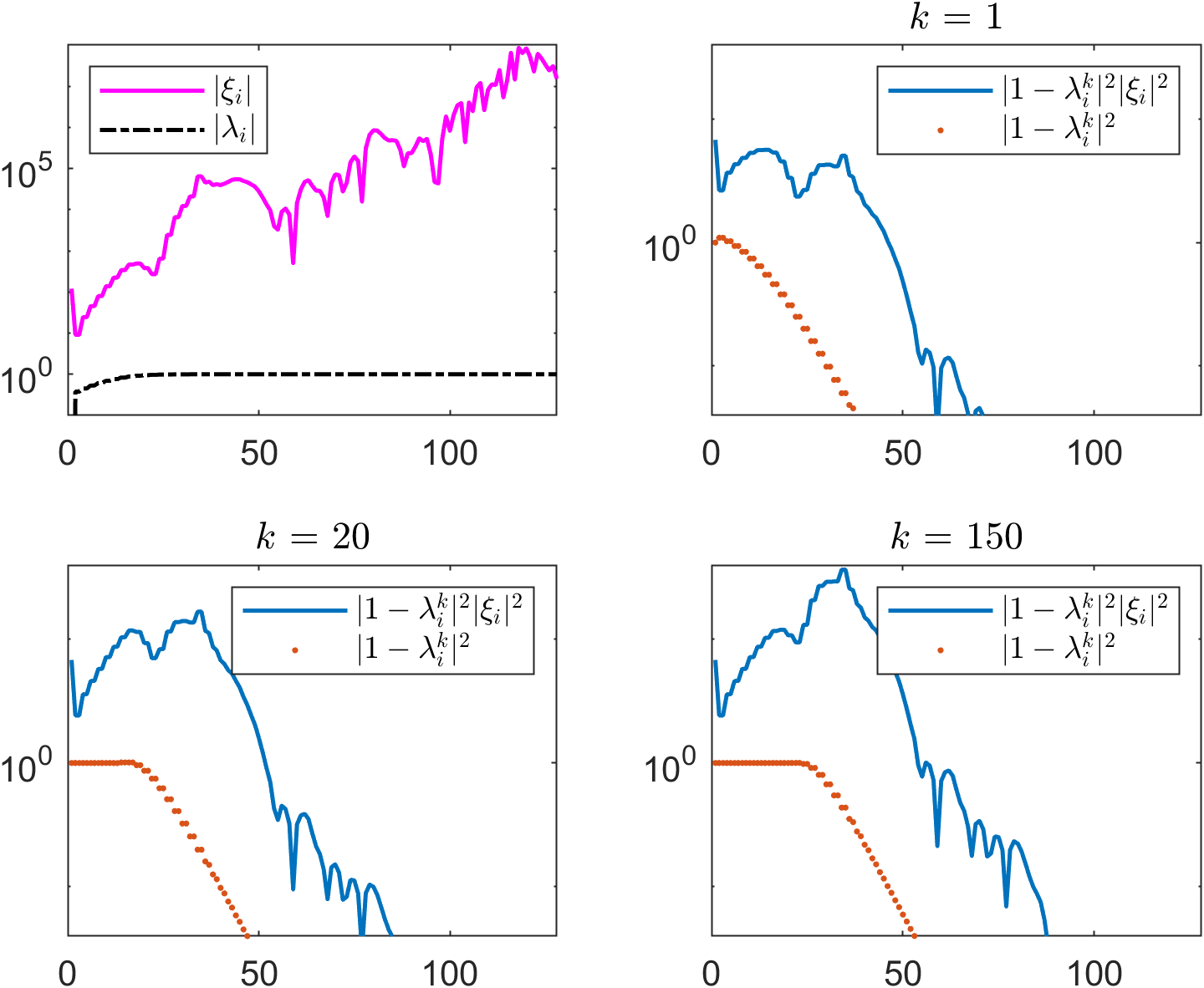}
\caption{The ingredients of $\| \bxi^k \|^2$ in \eqref{eq:normxik} for the {\sf gravity}
test problem of size $n=128$, illustrating an increasing amount of noise with the number of iterations.}
\label{fig:KaczmarzGravityAnalysis}
\end{figure}

\begin{example} \rm
In this example we illustrate the ingredients of $\| \bxi^k \|^2$ in \eqref{eq:normxik}
using the {\sf gravity} test problem
of size $n=128$ and with noise standard deviation $\sigma = 5\cdot 10^{-3}$.
Figure~\ref{fig:KaczmarzGravityAnalysis} shows the absolute values of the eigenvalues of $G$
(note that $\lambda_1=0$)
and the absolute values of components $\xi_i$ of the vector $\bxi = W^{-1} A^{\#} \be$.
The latter values increase, in average, as $i$ increases and the corresponding $|\lambda_i|$
decreases.
The components of large magnitude correspond to high-frequency components in $\bxi$ due to
the noise amplification that is common in inverse problems.
Figure~\ref{fig:KaczmarzGravityAnalysis} also shows, for three values of the number of iterations $k$,
the factors $|1-\lambda_i^k|^2$ and the terms $|1 - \lambda_i^k|^2 \ |\xi_i|^2$
of the sum in \eqref{eq:normxik}.
We see that as the number of iterations increases, more factors  $|1-\lambda_i^k|^2$
get close to 1 and more terms $|1 - \lambda_i^k|^2 \ |\xi_i|^2$ become large.
This illustrates why the norm $\| \bxi^k \|^2$ tends to increase with~$k$.
\end{example}

Now we turn to the statistical aspects.
The following result gives insight about the expected value of these norms.

\begin{proposition} \label{prop:expected}
With the assumptions in \eqref{eq:setup} the expected value related to the noise error satisfies
  \begin{equation} \label{eq:E1}
    \ex(\| A_k^{\#} \,  \be \|^2) = \sigma^2 \ \| A_k^{\#} \|_{\rm F}^2
    = \sigma^2 \ \| W\, \Phi_k W^{-1} \Afix \|_{\rm F}^2
  \end{equation}
and
  \begin{equation} \label{eq:E2}
    \ex(\|\bxi^k\|^2) = \sum_{i=1}^n \ |1 - \lambda_i^k|^2 \ \ex(|\xi_i|^2) \ ,
  \end{equation}
where
  \[
    \Phi_k = I - \Lambda^k = \mathrm{diag} \, (1 - \lambda_i^k) \ .
  \]
\end{proposition}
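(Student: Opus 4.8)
The plan is to verify the three identities essentially separately: the first equality in \eqref{eq:E1} is the standard Gaussian ``trace trick'', the second is a change of basis, and \eqref{eq:E2} follows at once from \eqref{eq:normxik} by linearity of expectation. For the first, I would use that for any fixed real matrix $M$ and $\be\sim\caln(0,\sigma^2 I)$,
\[
\ex\bigl(\|M\be\|^2\bigr) = \ex\bigl(\tr(M\be\be\trans M\trans)\bigr) = \tr\bigl(M\,\ex(\be\be\trans)\,M\trans\bigr) = \sigma^2\,\tr(M M\trans) = \sigma^2\,\|M\|_{\rm F}^2,
\]
invoking $\|v\|^2=\tr(vv\trans)$, linearity of the trace and of expectation, and $\ex(\be\be\trans)=\sigma^2 I$. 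Taking $M=A_k^{\#}$ gives $\ex(\|A_k^{\#}\be\|^2)=\sigma^2\|A_k^{\#}\|_{\rm F}^2$.

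For the second equality in \eqref{eq:E1}, I would rewrite $A_k^{\#}$ via \eqref{Ak} as $A_k^{\#}=(I-G^k)\,\Afix$ and observe that $\mathrm{range}(\Afix)\subseteq\calv$ by \eqref{eq:fixpoint}, so only the action of $G$ on $\calv$ is relevant. From the eigendecomposition \eqref{eigG} we have $G^k\vert_{\calv}=W\Lambda^k W^{-1}$; since the range of $\Afix$ is fixed by the orthogonal projector $W W^{-1}$ onto $\calv$, it follows that $A_k^{\#}=(I-G^k)\Afix=W(I-\Lambda^k)W^{-1}\Afix=W\Phi_k W^{-1}\Afix$, hence $\|A_k^{\#}\|_{\rm F}=\|W\Phi_k W^{-1}\Afix\|_{\rm F}$, which closes the chain. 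For \eqref{eq:E2}, I would take expectations directly in the pointwise identity \eqref{eq:normxik}: the factors $|1-\lambda_i^k|^2$ are deterministic, each $\xi_i$ is a fixed linear functional of the Gaussian vector $\be$ and therefore has finite second moment, so interchanging $\ex(\cdot)$ with the finite sum yields $\ex(\|\bxi^k\|^2)=\sum_{i=1}^n|1-\lambda_i^k|^2\,\ex(|\xi_i|^2)$.

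I do not anticipate any real difficulty; the only point needing care is the bookkeeping around the restriction to $\calv$. Since $W$ has $r=\mathrm{rank}(A)$ columns, ``$W^{-1}$'' denotes the left inverse with $W^{-1}W=I_r$ and $W W^{-1}=VV\trans$ the orthogonal projector onto $\calv$, and the identity $A_k^{\#}=W\Phi_k W^{-1}\Afix$ is valid precisely because $\mathrm{range}(\Afix)\subseteq\calv$, so that projector acts as the identity on everything $\Afix$ produces. When $A$ has full column rank this is vacuous, as then $\calv=\R^n$ and $W$ is square.
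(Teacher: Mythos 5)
Your proposal is correct and follows essentially the same route as the paper: the first identity in \eqref{eq:E1} is the standard Gaussian second-moment/trace computation (the paper cites the quadratic-form expectation formula $\ex(\be\trans Z\be)=\bmu\trans Z\bmu+\tr(Z\Sigma)$, which amounts to your trace trick), the second equality rests on $A_k^{\#}=(I-G^k)\Afix=W\Phi_k W^{-1}\Afix$ already set up before the proposition, and \eqref{eq:E2} is the same computation as the paper's $\tr(\Phi_k^*\Phi_k\,\mathrm{cov}(\bxi))$ read off diagonally, which you obtain by taking expectations termwise in \eqref{eq:normxik}. Your extra care about $W^{-1}$ as the left inverse with $WW^{-1}=VV\trans$ and $\mathrm{range}(\Afix)\subseteq\calv$ is a correct and welcome clarification of the restriction to $\calv$, not a deviation.
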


\begin{proof}
If $\be$ has mean $\bmu$ and covariance matrix $\Sigma$, and if $A$ is SPD, then $\ex(\be\trans Z\, \be) = \bmu\trans \! Z\, \bmu + \tr(Z\,\Sigma)$ \cite[Ch.~4]{qforms}.
Therefore,
  \[
    \ex\bigl(\| A_k^{\#} \,  \be \|^2\bigr) = \tr\bigl((A_k^{\#})\trans A_k^{\#} \, \mathrm{cov}(\be)\bigr)
    = \sigma^2 \ \tr\bigl((A_k^{\#})^* A_k^{\#}\bigr) = \sigma^2 \ \| A_k^{\#} \|_{\rm F}^2 \, ,
  \]
where we use the relation between trace and the Frobenius norm.  Similarly,
  \[
    \ex\bigl(\|\bxi^k\|^2\bigr) = \tr\bigl( \Phi_k^* \, \Phi_k \ \mathrm{cov}(\bxi) \bigr)
  \]
and we use that the diagonal elements of $\Phi_k^* \, \Phi_k \, \mathrm{cov}(\bxi)$ are
$(1-\lambda_i^k)^*(1-\lambda_i^k) \ \ex(|\xi_i|^2)$.
\end{proof}

\begin{figure}%[htb!]
\centering
\includegraphics[width=0.6\linewidth]{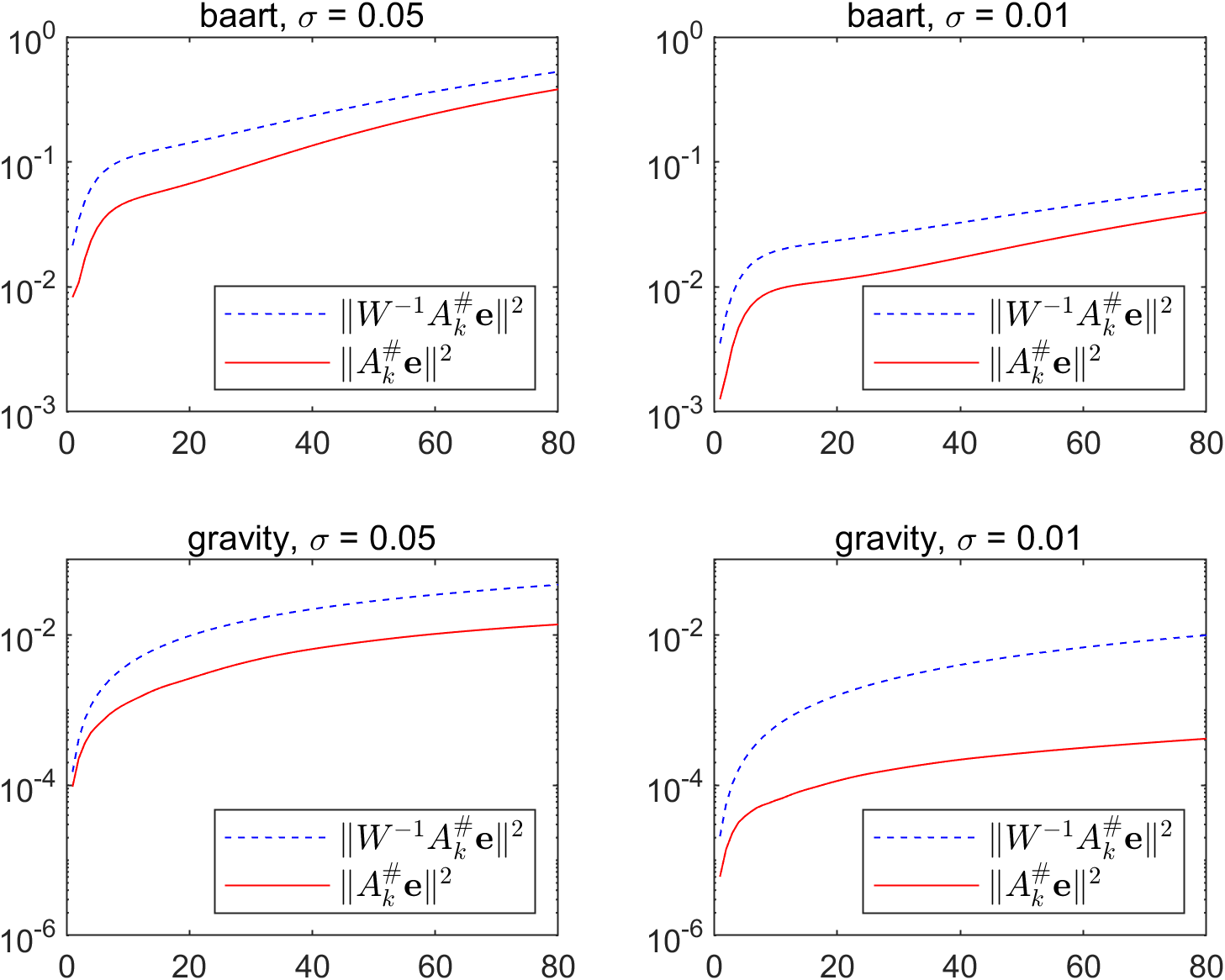}
\caption{Expected values of noise errors for two problems and two noise levels,
tracking each other quite well.}
\label{fig:August9ErrorPropagation}
\end{figure}

\begin{example} \rm
Proposition~\ref{prop:expected} above gives expressions for the expected value of the squared norm
of both the noise error $A_k^{\#}\be$ and the vector $\bxi^k$ of its coefficients
in the eigenvector basis.
Figure~\ref{fig:August9ErrorPropagation} shows that the two norms often tend to ``track''
each other and thus gives the same overall information about the growth of the noise error.
We show results for two test problems, {\sf gravity} and {\sf baart} and two different noise levels.
\end{example}

For the symmetric Kaczmarz method, the eigenvalues of $G\trans G$ are real and less than one in absolute value (cf.~Section~\ref{sec:symK}).
Hence, both expected values in Proposition~\ref{prop:expected} increase monotonically with $k$, and
their behavior follows the theory developed in \cite[Sec.~4]{HansenLAA}.

The situation is different for the ordinary Kaczmarz method.
In many numerical experiments we observe that
$\ex(\| A_k^{\#} \,  \be \|^2)$ as well as $\ex(\|\bxi^k\|^2)$ increase monotonically with $k$,
but this is not always the case.
As seen, the matrix $G$ typically has many complex eigenvalues~$\lambda_i$,
and therefore we cannot guarantee that the corresponding factors $|1-\lambda_i^k|$ increase
monotonically with~$k$ (plots of $|1-\lambda_i^k|$ versus $k$ have one or more
bumps, and the number of bumps increases as $|\lambda_i|$ increases towards 1).
Thus, in turn, we cannot ensure that the expected values behave monotonically.

However, recall that the expected values in Proposition~\ref{prop:expected}
involve all $n$ eigenvalues of $G$, and note that the bumps of $|1-\lambda_i^k|$,
for a fixed $k$, usually occur for different indices $i$.
The contributions from all $\lambda_i$ to the expected values in \eqref{eq:E1} and \eqref{eq:E2} therefore
often tend to balance each other, which explains the (almost) monotonic behavior that we often observe
in practice.

\begin{figure}[htb!]
\centering
\includegraphics[width=0.3\linewidth]{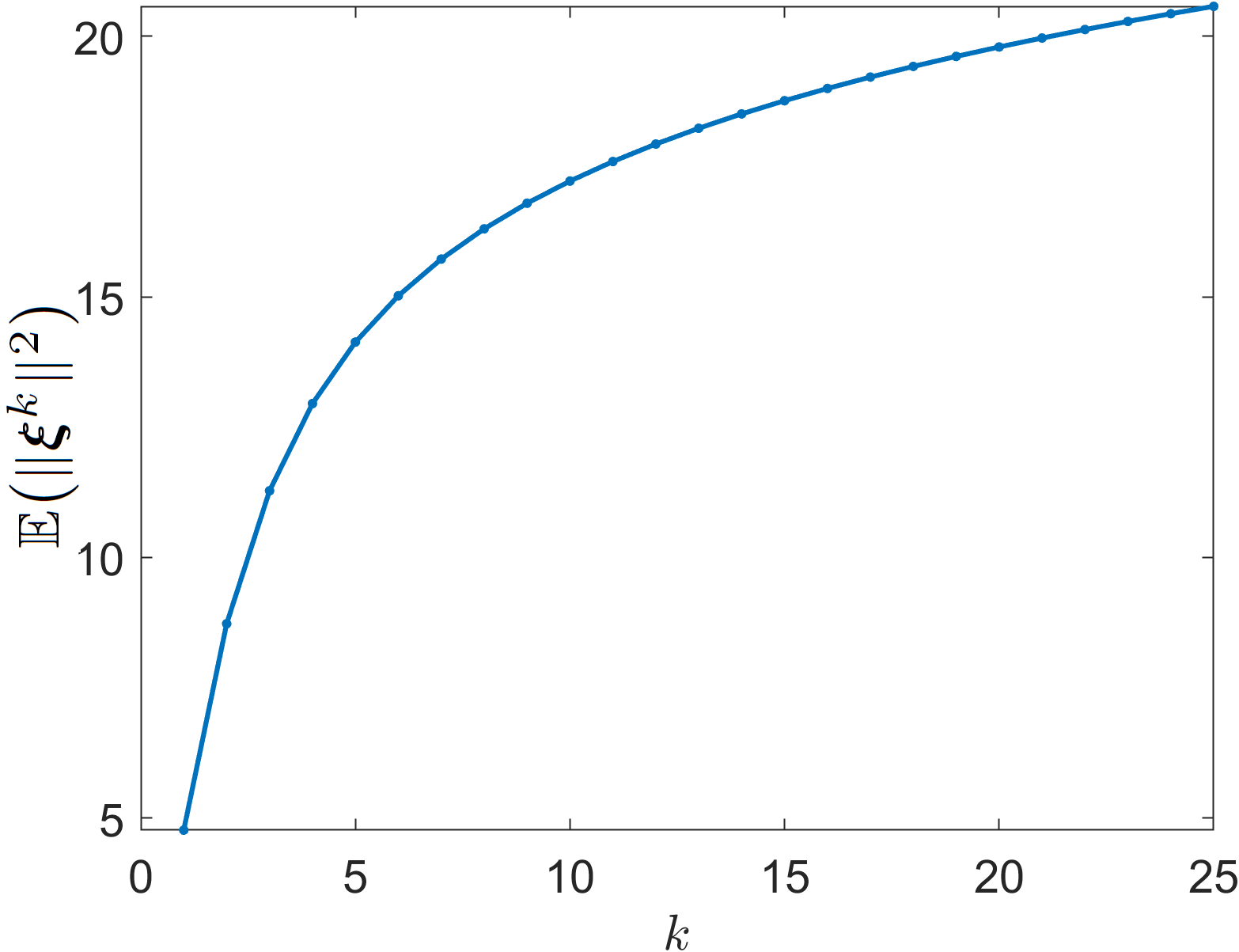}
\includegraphics[width=0.32\linewidth]{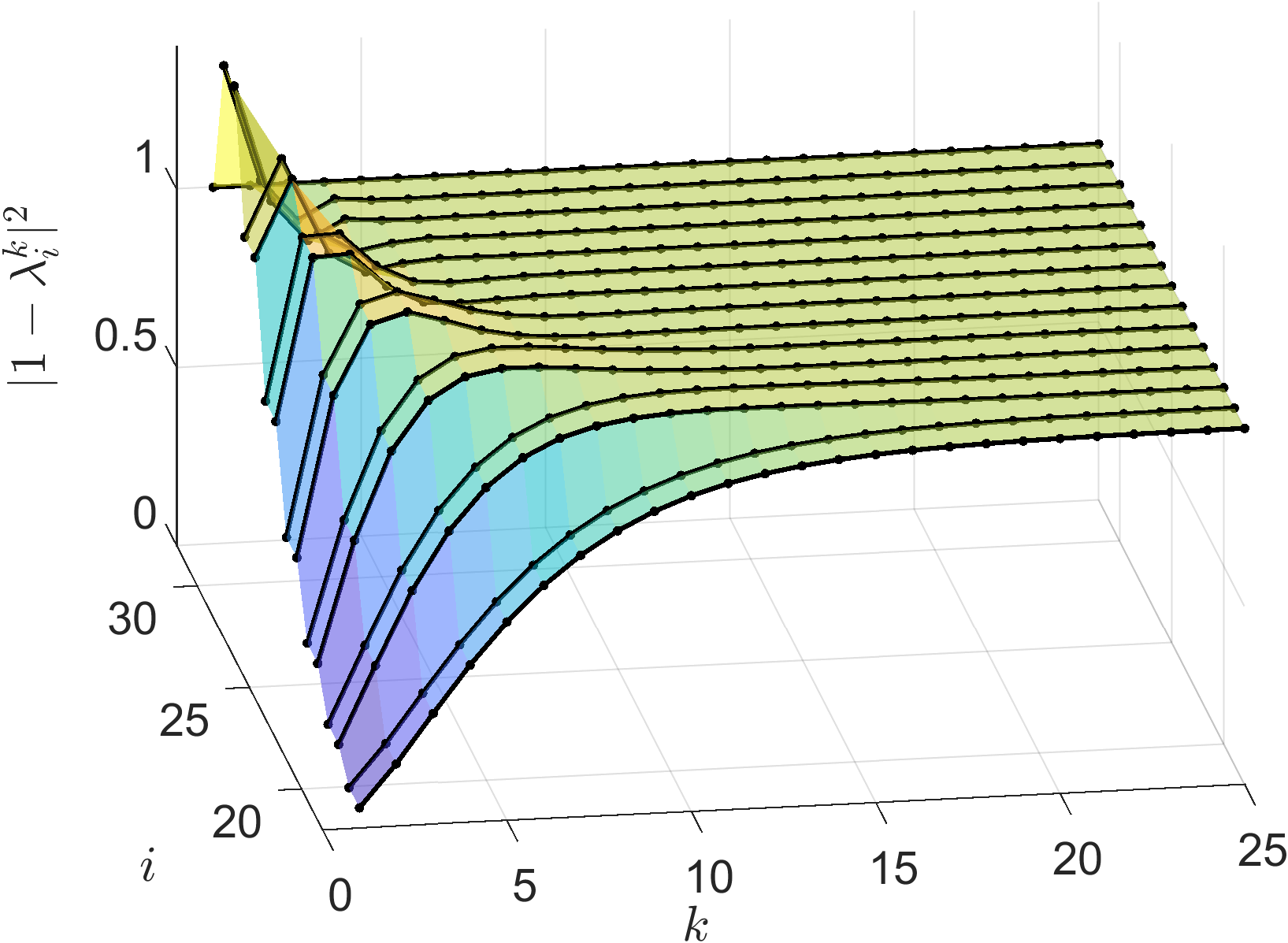}
\caption{Numerical example illustrating that $\ex\bigl(\|\bxi^k\|^2\bigr)$ can increase monotonically,
in spite of $|1-\lambda_i^k|$ not behaving monotonically with $k$. See the text for details.}
\label{fig:ExEgravity}
\end{figure}

\begin{example} \label{ex:ExEgravity} \rm
To illustrate the above-mentioned behavior and influence of $|1-\lambda_i^k|$, Figure~\ref{fig:ExEgravity} shows a numerical example using the {\sf gravity} test problem with ${\mathtt d} = 0.06$ and $n=32$.
For simplicity we assume that all $\ex(|\xi_i|^2) = 1$.
The eigenvalues $\lambda_i$ are in decreasing order and $\lambda_{32} = 0$.
While there are clearly visible bumps in the plots of $|1-\lambda_i^k|$ versus $k$
for the smaller eigenvalues
(for $i=22,\ldots,31$), the expected value $\ex(\|\bxi^k\|^2)$ increases monotonically with~$k$.
\end{example}

The analysis in this section on the expected growth of the noise error complements the analysis of
the iteration error in Section~\ref{sec:eigen}, and together they establish why we observe
semi-convergence for problems with noisy data.

\section{Conclusions}

The asymptotic convergence of the Kaczmarz method has extensively been studied in the literature (see, e.g., \cite{Pop18}).
We have mentioned the three main regimes (initial, transient, and asymptotic);
our focus here has been on a deeper understanding of the {\em initial phase} of the convergence, where the method often exhibits surprisingly fast convergence.
This speed is much appreciated by practitioners in many applications of inverse problems.
We have studied the eigenvalues of the iteration matrix $G$, which is typically nonnormal.
The spectral radius $\rho(G\vert_{\calv})$ of the iteration matrix can be very close to 1, thus explaining the very slow asymptotic convergence that is typical of the Kaczmarz method (in the absence of noise).

By considering the nullspace of $G$ we have explained the often-observed (but so far not well-understood) rapid initial convergence:
this is caused by eigencomponents in the solution corresponding to (near-)zero eigenvalues of~$G$.
For $\omega=1$ there is at least one zero eigenvalue, which also supports the practical observation that $\omega=1$ (which may not always be optimal for asymptotic convergence) is never a poor choice.
In addition, many rows of $A$ may be structurally orthogonal, especially in problems coming from CT applications, or otherwise may sometimes be arranged such that the inner products of subsequent rows are small.
This implies that there are frequently several more zero eigenvalues of $G$, further accelerating the early process.
We have also outlined several properties and advantages of choosing a small value of $\omega$.
\mh{The reason for the large plateau of the zero eigenvalue for {\sf gravity} (for $\omega \in [0.4, \, 1.6]$) is still an open problem.}

Moreover, we performed a statistical study of how the noise in the right-hand side enters the iteration vectors.
Our analysis shows that this noise component increases with the number of iterations,
and at some point it will dominate the computed solution.
The combined insight into the convergence and the influence of the noise
explains and confirms the semi-convergence that is often observed for inverse problems with noisy data,
where the reconstruction error initially decreases fast while, eventually, it grows
when the influence from the noise starts to dominate.

We have also pointed out three factors of influence
(row ordering of $A$, relaxation parameter $\omega$, and fixed point $\fixp$) on the initial and asymptotic convergence.
A row permutation may or may not improve the early stage, as this depends on the components of the solution $\fixp$ in the direction of the zero eigenvector(s) of $G$; for $\omega=1$, the first row of $A$ is such an eigenvector.
A permutation may also influence the asymptotic convergence, although it is good to bear in mind that this aspect may be less
relevant in the presence of noise and semi-convergence.

\bigskip \noindent
{\bf Acknowledgments.}
\mh{We thank two expert referees for their many useful comments.}
Some of this work has been inspired by, and is a follow-up of, the DTU--TU/e project CHARM funded by the EuroTech Postdoc Program (grant No.~754462).
Bart van Lith was funded by this program and we acknowledge his contributions.
P.~C.~Hansen has been partially supported by grant No.~25893 from the Villum Foundation.

\bibliographystyle{plain}
\bibliography{references}

@article{Beister,
     AUTHOR = {Beister, M. and Kolditz, D. and Kalender, W. A.},
      TITLE = {Iterative reconstruction methods in {X}-ray {CT}},
    JOURNAL = {Physica Medica},
     VOLUME = {28},
      PAGES = {94--108},
       YEAR = {2012}}

@article{BjEl79,
     AUTHOR = {Bj{\"o}rck, {\AA}. and Elfving, T.},
      TITLE = {Accelerated projection methods for computing pseudoinverse
               solutions of linear equations},
    JOURNAL = {BIT},
     VOLUME = {19},
      PAGES = {145--163},
       YEAR = {1979}}

@article{CEG83,
  title={Strong underrelaxation in {K}aczmarz's method for inconsistent systems},
  author={Censor, Y. and Eggermont, P. P. B. and Gordon, D.},
  journal={Num. Math.},
  volume={41},
  number={1},
  pages={83--92},
  year={1983}}

@article{Cen81,
  title={Row-action methods for huge and sparse systems and their applications},
  author={Censor, Y.},
  journal={SIAM Rev.},
  volume={23},
  number={4},
  pages={444--466},
  year={1981},
  publisher={SIAM}}

@inproceedings{EN08,
  title        = {Some properties of {ART}-type reconstruction algorithms},
  author       = {Elfving, T. and Nikazad, T.},
  year         = 2008,
  booktitle    = {Mathematical Methods in Biomedical Imaging and Intensity-Modulated Radiation Therapy (IMRT)},
  publisher    = {Edizioni della Normale Pisa},
  pages        = {131--150},
  editor       = {Y. Censor and M. Jiang and A. K. Louis}}

@article{FAM24,
  title={Survey of a class of iterative row-action methods: the {K}aczmarz method},
  author={Ferreira, I. A. and Acebr{\'o}n, J. A. and Monteiro, J.},
  journal={Numer. Algorithms},
  pages={1--39},
  year={2024}}

@article{ENi09,
  title={Properties of a class of block-iterative methods},
  author={Elfving, T. and Nikazad, T.},
  journal={Inverse Probl.},
  volume={25},
  number={11},
  pages={115011},
  year={2009}}

@article{ENH10,
     AUTHOR = {Elfving, T. and Nikazad, T. and Hansen, P. C.},
      TITLE = {Semi-convergence and relaxation parameters for a
               class of {SIRT} algorithms},
    JOURNAL = {Electron. Trans. Numer. Anal.},
     VOLUME = {37},
      PAGES = {321--336},
       YEAR = {2010}}

@article{EHN14,
  title={Semi-convergence properties of {K}aczmarz's method},
  author={Elfving, T. and Hansen, P. C. and Nikazad, T.},
JOURNAL = {Inverse Problems},
  volume={30},
  pages={055007},
  year={2014}}

@article{Elfving18,
     AUTHOR = {Elfving, T.},
      TITLE = {Row and column based iterations},
    JOURNAL = {Appl. Anal. Optim.},
     VOLUME = {2},
      PAGES = {219--236},
       YEAR = {2018}}

@article{Elfving25,
     AUTHOR = {Elfving, T.},
      TITLE = {Noise propagation in linear stationary iterations},
    JOURNAL = {Numer. Algorithms},
     VOLUME = {99},
      PAGES = {603--625},
       YEAR = {2025}}

@article{GBH70,
  title={{Algebraic reconstruction techniques (ART) for three-dimensional electron microscopy and X-ray photography}},
  author={Gordon, R. and Bender, R. and Herman, G. T},
  journal={J. Theor. Biol.},
  volume={29},
  number={3},
  pages={471--481},
  year={1970}}

@article{EPa87,
  title={On measures of nonnormality of matrices},
  author={Elsner, L. and Paardekooper, M. H. C.},
  journal={Linear Algebra Appl.},
  volume={92},
  pages={107--123},
  year={1987}}

@article{RegTools,
     AUTHOR = {Hansen, P. C.},
      TITLE = {Regularization {T}ools Version 4.0 for {M}atlab 7.3},
    JOURNAL = {Numer. Algorithms},
     VOLUME = {46},
      PAGES = {189--194},
       YEAR = {2007}}

@inproceedings{HaJR21,
  title        = {Stopping rules for algebraic iterative reconstruction methods in computed tomography},
  author       = {Hansen, P. C. and J{\o}rgensen, J. S. and Rasmussen, P. W.},
  year         = 2021,
  booktitle    = {21st Intern. Conf. Comput. Science Appl.},
  pages        = {60--70}}

@article{AIRtoolsII,
     AUTHOR = {Hansen, P. C. and J{\o}rgensen, J. S.},
      TITLE = {{AIR} {T}ools {II}:\ algebraic iterative reconstruction methods,
              improved implementation},
    JOURNAL = {Numer. Algorithms},
     VOLUME = {79},
      pages = {107--137},
       YEAR = {2018}}

@book{HansenDIP,
  author = {Hansen, P. C.},
  title = {Discrete Inverse Problems:\ Insight and Algorithms},
  publisher = {SIAM, Philadelphia},
  year = {2010},
  doi = {10.1137/1.9780898718836}}

@article{HansenLAA,
     AUTHOR = {Hansen, P. C.},
      TITLE = {Insight into semi-convergence of iterative regularization methods},
    JOURNAL = {Lin. Alg. Appl},
       YEAR = {2025, in press}
}

@book{HJo85,
  title={Matrix {A}nalysis},
  author={Horn, R. A. and Johnson, C. R.},
  year={2012},
  publisher={Cambridge University Press}
}

@article{Kaczmarz,
     AUTHOR = {Kaczmarz, S.},
      TITLE = {Angen{\"a}herte {A}ufl{\"o}sung von {S}ystemen linearer {G}leichungen},
    JOURNAL = {Bulletin de l’Academie Polonaise des Sciences et Lettres},
     VOLUME = {35},
      PAGES = {355--357},
       YEAR = {1937}}

@book{Qforms,
  author = {Mathai, A. M. and Provost, S. B.},
  title = {Quadratic Forms in Random Variables},
  publisher = {CRC Press},
  year = {1992},
  doi = {10.1137/1.9780898718836}}

@article{Gor06,
  title={{Parallel ART for image reconstruction in CT using processor arrays}},
  author={Gordon, D.},
  journal={IJPEDS},
  volume={21},
  number={5},
  pages={365--380},
  year={2006}}

@article{Pop99,
  title={Block-projections algorithms with blocks containing mutually orthogonal rows and columns},
  author={Popa, C.},
  journal={BIT},
  volume={39},
  number={2},
  pages={323--338},
  year={1999}}

@article{Pop18,
  title={Convergence rates for {K}aczmarz-type algorithms},
  author={Popa, C.},
  journal={Numer. Algorithms},
  volume={79},
  number={1},
  pages={1--17},
  year={2018}}

@article{RR13,
     AUTHOR = {Reichel, L. and Rodriguez, G.},
      TITLE = {Old and newparameter choice rules for discrete ill-posed problems},
    JOURNAL = {Numer. Algorithms},
     VOLUME = {63},
      PAGES = {65--87},
       YEAR = {2013}}

@book{Saa11,
  title={Numerical Methods for Large Eigenvalue Problems},
  author={Saad, Y.},
  year={2011},
  publisher={SIAM}}

@article{SHa14,
  title={Multicore performance of block algebraic iterative reconstruction methods},
  author={S{\o}rensen, H. H. B. and Hansen, P. C.},
  journal={SIAM J. Sci. Comput.},
  volume={36},
  number={5},
  pages={C524--C546},
  year={2014}}

@article{Tanabe,
     AUTHOR = {K. Tanabe},
      TITLE = {Projection method for solving a singular system of linear equations and its applications},
    JOURNAL = {Numer. Math.},
     VOLUME = {17},
      PAGES = {103--214},
       YEAR = {1971}}

@book{TEm20,
  title={Spectra and Pseudospectra: the Behavior of Nonnormal Matrices and Operators},
  author={Trefethen, L. N. and Embree, M.},
  year={2020},
  publisher={Princeton University Press}}

@article{Bart,
     AUTHOR = {van Lith, B. S. and Hansen, P. C. and Hochstenbach, M. E.},
      TITLE = {A twin error gauge for {K}aczmarz’s iterations},
    JOURNAL = {SIAM J. Sci. Comp.},
     VOLUME = {32},
      PAGES = {S173--S199},
       YEAR = {2021}}

\end{document}